\newtheorem{proposition}{Proposition}
\newtheorem{cor}[proposition]{Corollary}
\newtheorem{lemma}[proposition]{Lemma}
\theoremstyle{definition}
\newtheorem*{rmk*}{Remark}
\numberwithin{equation}{section}
\renewcommand{\tilde}[1]{\widetilde{#1}}%
\newcommand{\XX}{\mathcal X}
\newcommand{\N}{\mathbb N}
\newcommand{\Q}{\mathbb Q}
\newcommand{\Z}{\mathbb Z}
\newcommand{\C}{\mathbb C}
\newcommand{\R}{\mathbb R}
\renewcommand{\H}{\mathbb{H}}
\newcommand{\F}{\mathbb{F}}
\renewcommand{\Re}{\operatorname{Re}}
\newcommand{\llangle}{\langle\! \langle}
\newcommand{\rrangle}{\rangle\! \rangle}
\newcommand{\tT}{\tilde{T}}
\newcommand{\bT}{\bar{T}}
\newcommand{\tOmega}{\tilde{\Omega}}
\newcommand{\SSS}{\mathcal{S}}
\newcommand{\AAA}{\mathcal{A}}
\newcommand{\gplus}{\gamma_{\infty}}
\newcommand{\gneg}{\gamma_{-\infty}}
\newcommand{\ST}{ST^{-1}}
\newcommand{\MM}{\mathcal M}
\newcommand{\balpha}{\bar{\alpha}}
\newcommand{\e}{\epsilon}
\newcommand{\rL}{\mathbb{L}}
\newcommand{\bL}{\mathbf{L}}
\newcommand{\rR}{\mathbb{R}}
\newcommand{\bR}{\mathbf{R}}
\newcommand{\x}{\xi_\gamma}
\title[Coding of geodesics, odd and even continued fractions]{Coding of geodesics on some modular surfaces and
applications to odd and even continued fractions}
\author{Florin P. Boca}
\author{Claire Merriman}
\address{Department of Mathematics, University of Illinois at Urbana-Champaign,
Urbana, IL 61801}
\address{E-mail: fboca@math.uiuc.edu}
\address{E-mail: emerrim2@illinois.edu}
\begin{document}

\date{\today}

\begin{abstract}
The connection between geodesics on the modular surface $\operatorname{PSL}(2,\Z)\backslash \H$ and regular
continued fractions, established by Series, is extended to a connection between geodesics on $\Gamma\backslash \H$ and odd
and grotesque continued fractions,
where $\Gamma\cong \Z_3 \ast \Z_3$ is the index two subgroup of $\operatorname{PSL}(2,\Z)$ generated by the free elements
of order three $\left( \begin{smallmatrix} 0 & -1 \\ 1 & 1 \end{smallmatrix} \right)$ and
$\left( \begin{smallmatrix} 0 & 1 \\ -1 & 1 \end{smallmatrix} \right)$, and having an ideal quadrilateral as fundamental domain.
A similar connection between geodesics on $\Theta\backslash \H$ and even continued fractions is discussed
in our framework, where $\Theta$ denotes the Theta subgroup of $\operatorname{PSL}(2,\Z)$ generated by
$\left( \begin{smallmatrix} 0 & -1 \\ 1 & 1 \end{smallmatrix} \right)$ and
$\left( \begin{smallmatrix} 1 & 2 \\ 0 & 1 \end{smallmatrix} \right)$.
\end{abstract}

\maketitle

\section{Introduction}

The connection between geodesics on the modular surface $\MM =\operatorname{PSL}(2,\Z)\backslash \H$
and regular continued fractions (RCF),
originating in the seminal work of Artin \cite{Art}, has generated a significant amount of interest.
Inspired by earlier work of Moeckel \cite{Moe}, Series \cite{Ser} established explicit connections between the geodesic
flow on $\MM$, geodesic coding, and RCF dynamics. The way ideal triangles of the Farey tessellation $\F$ cut oriented
geodesics $\gamma$ on the upper half-plane $\H$ play a central role in this approach.
The geodesic crosses two sides of a triangle,
and the geodesic arc is labeled $L$ or $R$ according to whether the vertex shared by the sides is to the left
or right, respectively, of the geodesic. This labeling is invariant under $\Gamma (1)=\operatorname{PSL}(2,\Z)$,
and hence under any of its subgroups.
These geodesics $\gamma$ are lifts of geodesics
$\bar{\gamma}$ on $\MM$, which are uniquely determined
by infinite two-sided cutting sequences $ \ldots L^{n_{-1}} R^{n_0} L^{n_1} \ldots$. The sequence of positive
integers $(n_i)_{i=-\infty}^\infty$ is intimately related with regular continued fractions, since
\begin{equation*}
\gamma_{-\infty}=\frac{-1}{n_0+\frac{1}{n_{-1}+\frac{1}{n_{-2}+\cdots}}} =
-[ n_0,n_{-1},n_{-2},\ldots ]\ \ \mbox{\rm and} \ \
\gamma_{\infty}=n_1 +\frac{1}{n_2+\frac{1}{n_3 + \cdots}} =
[ n_1;n_2,n_3,\ldots ]
\end{equation*}
are the negative and positive endpoints of some lift $\gamma$ of $\bar{\gamma}$ to $\H$.
Shifting along the cutting sequences is related to
the Gauss map $T$ on
$[0,1)$ and its natural extension $\bT$ on $[0,1)^2$, defined by
\begin{equation*}
T\big( [n_1,n_2,n_3,\ldots]\big) =[n_2,n_3,n_4,\ldots ],
\end{equation*}
\begin{equation*}
\bT \big( [n_1,n_2,n_3,\ldots ],[n_0,n_{-1},n_{-2},\ldots ]\big) =
\big( [n_2,n_3,n_4,\ldots ],[n_1,n_0,n_{-1},\ldots ]\big) .
\end{equation*}
Some different approaches for coding the geodesic flow on $\MM$ were considered by Arnoux in \cite{Arn}
and by Katok and Ugarcovici \cite{KU1}.

A large class of continued fractions has been studied in the context of
the geodesic flow and symbolic dynamics. A non-exhaustive list includes backward continued fractions
\cite{AF1,AF2}, even continued fractions \cite{AaD,BL,Cel,KU1}, Rosen continued fractions \cite{MS},
$(a,b)$-continued fractions \cite{KU2}, Nakada $\alpha$-continued
fractions and $\alpha$-Rosen continued fractions \cite{AS},
or other classes of complex or Heisenberg continued fractions \cite{LV}.
In different directions, the symbolic dynamics associated with the
billiard flow on modular surfaces of uniform triangle groups, and with the
geodesic flow on two-dimensional hyperbolic good orbifolds, have been thoroughly
investigated by Fried \cite{Fr} and Pohl \cite{Po}.

This note describes codings of geodesics on the modular surfaces $\MM_o$ and $\MM_e$,
associated with subgroups of index two and three in $\Gamma (1)$, respectively.
The coding of $\MM_o$ is hereby connected, in the spirit of \cite{Ser}, to the dynamics of odd and grotesque continued fractions
(OCF and GCF respectively),
and the coding of $\MM_e$ is connected to the even (ECF) and extended even continued fractions.
These 
continued fractions were first investigated by
Rieger \cite{Rie} and by Schweiger \cite{Sch1,Sch2}.

We consider the modular surface $\MM_o =\Gamma \backslash \H=\pi_o (\H)$, where $\Gamma$ is the
index two subgroup of $\Gamma (1)=\operatorname{PSL}(2,\Z)$ generated by the M\" obius
transformations $S(z)=\frac{-1}{z+1}$ and $T(z)=z+2$ acting on $\H$. Equivalently,
$\Gamma$ is generated by the order three matrices
$S=\left( \begin{smallmatrix} 0 & -1 \\ 1 & 1 \end{smallmatrix}\right)$
and $ST^{-1}=\left( \begin{smallmatrix} 0 & 1 \\ -1 & 1 \end{smallmatrix} \right)$.
The corresponding Dirichlet fundamental domain with respect to $i$ is
the quadrilateral ${\mathfrak F}$ bounded by the geodesic arcs
$[0,\omega]$, $[\omega,\infty]$, $[0,\omega^2]$ and $[\omega^2,\infty]$,
where $\omega=\frac{1}{2} (1+i\sqrt{3})$ (see Figure \ref{Figure1}).
The transformation $S$ fixes $\omega^2=\omega-1$ and cyclically permutes the points
$\infty, 0, -1$, and respectively $i,\frac{-1+i}{2},-1+$.
On the other hand $ST^{-1}$ fixes $\omega$ and permutes $\infty, 0, 1$, and
$1,\frac{1+i}{2},1+i$ (see Figures \ref{Figure1} and \ref{Figure3}).
As shown by Lemma \ref{lemma7}, the point $\pi_o(\infty)$ is the only cusp of
$\MM_o$, while $\pi_o(i)$ is a regular point for $\MM_o$ since we deal with
a two-fold cover ramified at $i$ which makes the singularity disappear.

The parts of the fundamental region ${\mathfrak F}$ on either side of the imaginary axis
are considered separately.
First, we consider the triangle $\omega^2,0,\infty$. The union of the images of this triangle under $I, S,$ and $S^2$
gives the ideal triangle $-1,0,\infty$. Similarly, the triangle
with vertices $\omega, 0, \infty$, under $I, \ST$ and $(\ST)^2$ is the ideal triangle $1,0,\infty$.
Together, these regions form the ideal quadrilateral $\Delta$ with vertices $-1,0,1$ and $\infty$.
The images of $\Delta$ under $\Gamma$ form the Farey tessellation. That is, two rational numbers
$\frac{p}{q},\frac{p'}{q'}$ are joined by a side of the Farey tessellation precisely when $pq'-p'q=\pm 1$.

\begin{figure}
\centering\includegraphics[scale=0.8, bb=100 10 260 160]{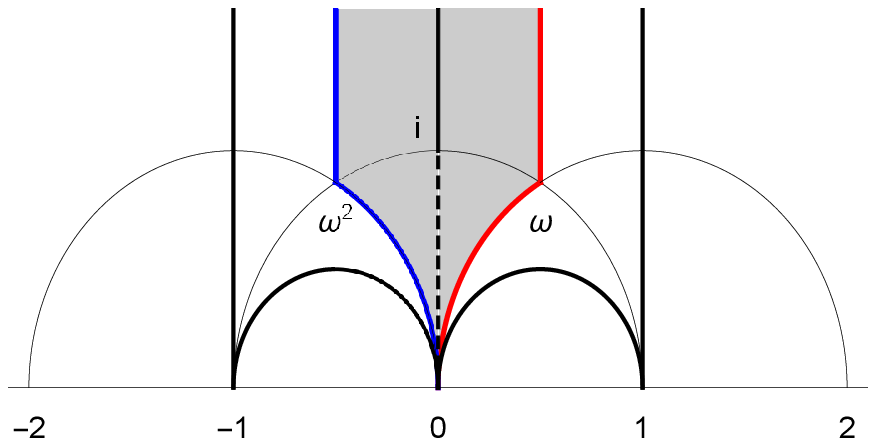}
\includegraphics[scale=0.9, bb=0 0 80 120]{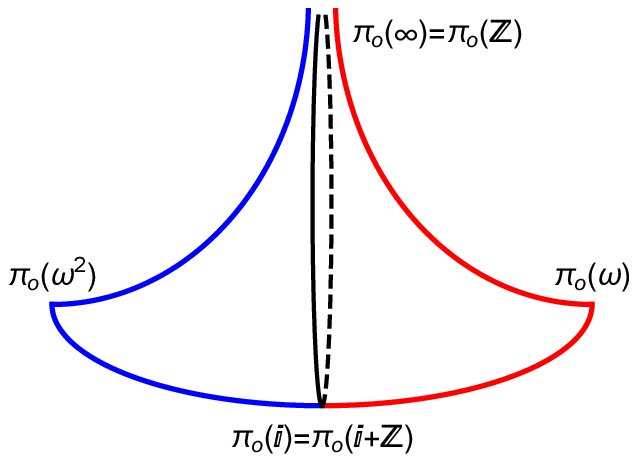}
\caption{The fundamental region ${\mathfrak F}$ and the modular surface $\MM_o=\Gamma \backslash \H=\pi_o (\H)$}
\label{Figure1}
\end{figure}

\begin{figure}
\centering\includegraphics[width=12cm]{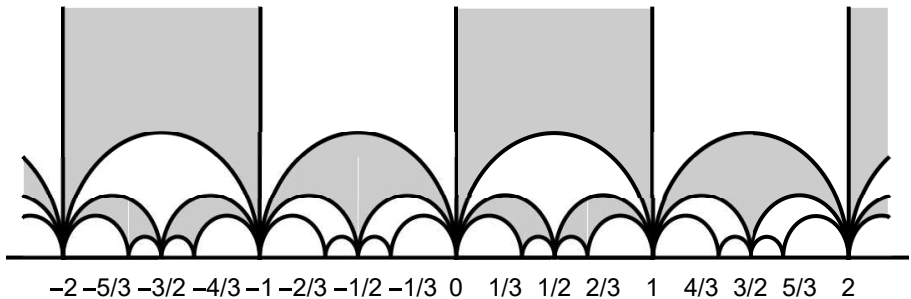}
\caption{The checkered Farey tessellation}
\label{Figure2}
\end{figure}

This is the same tessellation considered by Series \cite{Ser}, but we add a checkerboard coloring,
as shown in Figure \ref{Figure2}.
The triangle $-1, 0, \infty$ is light, while $1,0,\infty$ is dark, then continue in a checkerboard pattern,
so that each of the three neighboring Farey cells of a light cell are dark, and vice versa.
We code oriented geodesics by including the shade of the Farey cell.
Concretely, a light $L$ is denoted by $\rL$, a dark $L$ by $\bL$,
a light $R$ by $\rR$, and a dark $R$ by $\bR$. This way, every geodesic
in $\H$ with irrational endpoints is assigned an infinite two-sided sequence of symbols
$\rL$, $\bL$, $\rR$ and $\bR$. We also require that a light letter $\rL$ or $\rR$
can only be succeeded and preceded by a dark one, and vice versa.

The coding of geodesics on $\MM_o$ is
described by concatenating words of the following type:
\begin{equation}\label{string_types_o}
 (\rL \bL)^{k-1} \rL \bR  ,\quad
 (\rL \bL)^{k-1} \rR ,\quad
 (\bR \rR)^{k-1} \bL ,\quad
 (\bR \rR)^{k-1} \bR \rL ,\qquad k\geqslant 1.
\end{equation}
Single letter strings are allowed. We require colors of individual letters to alternate.
Thus, strings of the first and third type must be succeeded by those of the first or second.

A powerful tool in the study of endomorphisms in ergodic theory is provided by the natural
extension, an invertible transformation that dilates the given endomorphism and preserves many
of its ergodic properties, such as ergodicity or mixing \cite{CFS}.
Here, we describe the natural extension of the Gauss type OCF map (or, by reversing order,
of the Gauss type GCF map) as a factor of a certain explicit cross-section of the geodesic flow on $\MM_o$.
In particular, this provides a direct geometric proof for the ergodicity of this map and
allows us to recapture the invariant measures for the Gauss type OCF and GCF maps.
Other applications include a characterization of quadratic surds in terms of their OCF expansion
and their conjugate GCF expansion, as well as a tail-equivalence type description of the orbits of the action
of $\Gamma$ on the real line.

We also observe that similar results can be
obtained for even continued fractions, using the Farey tessellation without coloring
and the modular surface $\MM_e=\Theta \backslash \H$, where $\Theta$ denotes the
index three Theta subgroup in $\Gamma(1)$ generated by the transformations
$S(z)=-\frac{1}{z}$ and $T(z)=z+2$.

\section{Odd, grotesque, and even continued fractions}\label{fractiontypes}
In this section we review some properties and dynamics of odd, grotesque and even continued fractions. Odd and even continued fractions are part of the broader class of
$D$-continued fractions introduced in \cite{Kra} (see also \cite{HK,Mas,DHKM}), while the GCF is the dual algorithm
of the odd continued fractions. Instead of giving a compact presentation, we chose to be more repetitive, in order to
clarify notation and the difference between these three classes of continued fractions.

\subsection{Odd continued fractions}\label{ocf}
The OCF expansion of a number $x\in[0,1]\setminus \Q$ is given by
\begin{equation}\label{eq2}
x=[\![(a_1,\e_1), (a_2, \e_2), (a_3, \e_3), ...]\!]_o = \cfrac{1}{a_1+\cfrac{\e_1}{a_2+\cfrac{\e_2}{a_3+\cdots}}},
\end{equation}
where $\e_i =\e_i(x)\in \{\pm1\}$, $a_i =a_i(x)\in 2\N -1$ and $a_i + \e_i \geqslant 2$.
Such an expansion is unique. We also consider
\begin{equation*}
x=[\![(a_0,\e_0);(a_1,\e_1),(a_2,\e_2),\ldots ]\!]_o :=a_0+\e_0 [\![(a_1,\e_1),(a_2,\e_2),\ldots ]\!]_o \in [1,\infty),
\end{equation*}
with $\e_0\in \{ \pm 1\}$, $a_0 \in 2\N -1$ and $a_0+\e_0 \geqslant 2$, so that
$x\in (a_0,a_0+1)$ if $\e_0=1$ and $x\in (a_0-1,a_0)$ if $\e_0=-1$.

The odd Gauss map $T_o$ acts on $[0,1]$ by
\begin{equation*}
T_o(x)= \e \bigg( \frac{1}{x}-2k+1\bigg)
\quad
\mbox{\rm if} \quad
x\in B(\e,k) :=\begin{cases} \big( \frac{1}{2k},\frac{1}{2k-1}\big)
& \mbox{\rm if $\e=1$, $k\geqslant 1$} \\
\big( \frac{1}{2k-1},\frac{1}{2k-2}\big) & \mbox{\rm if $\e=-1$, $k\geqslant 2.$}
\end{cases}
\end{equation*}
Symbolically, $T_o$ acts on the OCF representation by removing the leading digit $(a_1,\e_1)$ of $x$, i.e.
\begin{equation*}
T_o \big( [\![(a_1,\e_1),(a_2,\e_2),(a_3,\e_3),\ldots]\!]_o\big) =[\![ (a_2,\e_2),(a_3,\e_3),(a_4,\e_4),\ldots ]\!]_o .
\end{equation*}
The probability measure $d\mu_o (x)=\frac{1}{3\log G} ( \frac{1}{G-1+x}+\frac{1}{G+1-x}) dx$
on $[0,1]$ is $T_o$-invariant (see \cite{Rie,Sch1}), where we denote
\begin{equation*}
G=\tfrac{1}{2}(\sqrt{5}+1).
\end{equation*}

\subsection{Grotesque continued fractions}\label{gcf}
Rieger's GCF representation of an irrational $y\in I_G:=(G-2,G)$ is given by
\begin{equation}\label{eq3}
y=\llangle (b_0,\e_0), (b_1, \e_1), (b_2, \e_2), ...\rrangle_o =
\cfrac{\e_0}{b_0+\cfrac{\e_1}{b_1+\cfrac{\e_2}{b_2+\cdots}}},
\end{equation}
where $\e_i \in \{\pm1\}$, $b_i=b_i(y) \in 2\N -1$ and $b_i + \e_i \geqslant 2$.
Every irrational number $y\in I_G$ can be uniquely represented as above, by
taking $\e_0=\e_0(y)=\operatorname{sign} y$ and $b_0=b_0(y)$ the unique odd
positive integer with $\frac{1}{\lvert y\rvert}-G \leqslant b_0(y) \leqslant
\frac{1}{\lvert y\rvert}-G+2$. The corresponding Gauss map $\tau_o$ acts
on $I_G$ by
\begin{equation*}
\tau_o (y)=\frac{\e_0(y)}{y}-b_0(y)=\frac{1}{\lvert y\rvert}-b_0(y) ,
\end{equation*}
or on the symbolic representation \eqref{eq3} by
\begin{equation*}
\tau_o \left( \llangle (b_0,\e_0),(b_1,\e_1),(b_2,\e_2),\ldots \rrangle_o \right)
=\llangle (b_1,\e_1),(b_2,\e_2),(b_3,\e_3),\ldots \rrangle_o ,
\end{equation*}
and $d\nu_o (y)=\frac{1}{3\log G} \cdot \frac{dy}{y+1}$ provides
a $\tau_o$-invariant probability measure (see \cite{Rie,Seb}).

Consider $\Omega_o =(0,1)\times I_G$.
The natural extension of $T_o$ can be realized as the invertible map
\begin{equation*}
\bT_o :\Omega_o \rightarrow \Omega_o , \qquad \bT_o (x,y) =\bigg( T_o (x),\frac{\e_1(x)}{a_1(x)+y}\bigg),
\end{equation*}
and it acts on $\Omega_o \cap (\R\setminus\Q)^2$ as a two-sided shift as follows:
\begin{equation*}
\begin{split}
\bT_o &  \big([\![(a_1,\e_1),(a_2,\e_2),(a_3,\e_3),\ldots ]\!]_o ,\llangle
(a_0,\e_0),(a_{-1},\e_{-1}),(a_{-2},\e_{-2}),\ldots \rrangle_o \big)
\\ & = \big([\![(a_2,\e_2),(a_3,\e_3),(a_4,\e_4),\ldots ]\!]_o ,\llangle (a_1,\e_1),(a_0,\e_0),(a_{-1},\e_{-1}),\ldots \rrangle_o \big).
\end{split}
\end{equation*}

It is convenient to consider the extension $\tT_o$ of $\bT_o$ to $\tOmega_o :=\Omega_o \times \{ -1,1\}$ defined by
\begin{equation}\label{eq4}
\tT_o (x,y,\e) :=  \big( \bT_o (x,y), -\e_1(x) \e \big) ,
\end{equation}
with inverse
\begin{equation}\label{eq2.4}
\begin{split}
\tilde{T}^{-1}_o & \big([\![(a_1,\e_1),(a_2,\e_2),(a_3,\e_3),\ldots ]\!]_o ,
\llangle (a_0,\e_0),(a_{-1},\e_{-1}),(a_{-2},\e_{-2}),\ldots \rrangle_o ,\e\big)
\\ & = \big([\![(a_0,\e_0),(a_1,\e_1),(a_2,\e_2),\ldots ]\!]_o ,
\llangle (a_{-1},\e_{-1}),(a_{-2},\e_{-2}),(a_{-3},\e_{-3}),\ldots \rrangle_o ,-\e_0 \e\big).
\end{split}
\end{equation}

\subsection{Even continued fractions}\label{ecf}
The ECF expansion in $[0,1]\setminus\Q$ is given by
\begin{equation*}
x=[\![(a_1,\e_1), (a_2, \e_2), (a_3, \e_3), ...]\!]_e = \cfrac{1}{a_1+\cfrac{\e_1}{a_2+\cfrac{\e_2}{a_3+\cdots}}},
\end{equation*}
where $\e_i =\e_i(x)\in \{\pm1\}$ and $a_i =a_i(x)\in 2\N$.
Such an expansion is unique.
Every number $x\in [1,\infty) \setminus \Q$ has a unique infinite ECF expansion
\begin{equation*}
x=[\![(a_0,\e_0);(a_1,\e_1),(a_2,\e_2),\ldots ]\!]_e :=a_0+\e_0 [\![(a_1,\e_1),(a_2,\e_2),\ldots ]\!]_e \in [1,\infty),
\end{equation*}
with $\e_0\in \{ \pm 1\}$ and $a_0 \in 2\N$.

The even Gauss map $T_e$ acts on $[0,1]$ by
\begin{equation*}
T_e (x)= \e \bigg( \frac{1}{x}-2k\bigg)
\quad
\mbox{\rm if} \quad
x\in B(\e,k) =\begin{cases} \big( \frac{1}{2k+1},\frac{1}{2k}\big)
& \mbox{\rm if $\e=1$, $k\geqslant 1$} \\
\big( \frac{1}{2k},\frac{1}{2k-1}\big) & \mbox{\rm if $\e=-1$, $k\geqslant 1.$}
\end{cases}
\end{equation*}
Symbolically, $T_e$ acts on the ECF representation by removing the leading digit $(a_1,\e_1)$ of $x$, i.e.
\begin{equation*}
T_e \big( [\![(a_1,\e_1),(a_2,\e_2),(a_3,\e_3),\ldots]\!]_e \big) =[\![ (a_2,\e_2),(a_3,\e_3),(a_4,\e_4),\ldots ]\!]_e .
\end{equation*}
Here the infinite measure $d\mu_e (x)=\big( \frac{1}{1+x}+\frac{1}{1-x}\big) dx$
is $T_e$-invariant (see \cite{BL,Sch1,Sch2}).

The even continued fraction equivalent of the grotesque continued fractions are the extended even continued fractions.
Given $\e_i \in \{ \pm 1\}$ and even positive integers $b_i$, we denote
\begin{equation*}
y=\llangle (b_0,\e_0),(b_1,\e_1),\ldots \rrangle_e :=
\e_0 [\![(b_0,\e_1),(b_1,\e_2),(b_2,\e_3),\ldots ]\!]_e = \cfrac{\e_0}{b_0+\cfrac{\e_1}{b_1+\cdots}}
\in (-1,1).
\end{equation*}
The corresponding shift $\tau_e$ acts on $(-1,1)$ by
\begin{equation*}
\tau_e (y)=\frac{\e_0(y)}{y}-b_0(y),
\end{equation*}
or in the symbolic representation by
\begin{equation*}
\tau_e \big( \llangle (b_0,\e_0),(b_1,\e_1),(b_2,\e_2),\ldots \rrangle_e \big)
=\llangle (b_1,\e_1),(b_2,\e_2),(b_3,\e_3),\ldots \rrangle_e .
\end{equation*}

Consider $\Omega_e =(0,1)\times (-1,1)$ and
the natural extension of $T_e$, realized as the map \cite{Sch2}
\begin{equation*}
\bT_e :\Omega_e \rightarrow \Omega_e , \qquad \bT_e (x,y) =\bigg( T_e (x),\frac{\e_1(x)}{a_1(x)+y}\bigg).
\end{equation*}
Equivalently, $\bT_e$ is acting on $\Omega_e \cap (\R\setminus\Q)^2$ as a two-sided shift:
\begin{equation*}
\begin{split}
\bT_e &  \big([\![(a_1,\e_1),(a_2,\e_2),(a_3,\e_3),\ldots ]\!]_e ,\llangle (a_0,\e_0),(a_{-1},\e_{-1}),(a_{-2},\e_{-2}),\ldots \rrangle_e \big)
\\ & = \big([\![(a_2,\e_2),(a_3,\e_3),(a_4,\e_4)\ldots ]\!]_e ,\llangle (a_1,\e_1),(a_0,\e_0),(a_{-1},\e_{-1}),\ldots \rrangle_e \big).
\end{split}
\end{equation*}

It is also convenient to consider the extension $\tT_e$ of $\bT_e$ to $\tOmega_e =\Omega_e \times \{ -1,1\}$ defined by
\begin{equation*}
\tT_e (x,y,\epsilon) =  \big( \bT_e (x,y), -\e_1(x) \e \big) ,
\end{equation*}
with inverse
\begin{equation}\label{eq2.6}
\tT_e^{-1} (x,y,\e)= \big( \bT_e^{-1}(x,y), -\e_0(y) \e\big) .
\end{equation}

\section{Cutting sequences for geodesics on $\MM_o$}\label{oddseq}
\subsection{The group $\Gamma$ and the modular surface $\MM_o=\Gamma\backslash \H$}\label{oddsurface}
The fundamental Dirichlet region corresponding to the point $i$ and the
order three generators $S$ and $\ST$ is the quadrilateral
\begin{equation*}
{\mathfrak F} =\{ z\in \H : \lvert \Re z \rvert \leqslant \tfrac{1}{2} , \
\lvert z-1\rvert \geqslant 1,\  \lvert z+1\rvert \geqslant 1\} ,
\end{equation*}
with edges $[\omega,\infty]$, $[0,\omega]$ identified by $\ST =
\left( \begin{smallmatrix} 0 & 1 \\ -1 & 1 \end{smallmatrix}\right)$, and with
$[0,\omega^2]$, $[\omega^2,\infty]$ identified by $S$.
The resulting quotient space $\MM_o$ is homeomorphic to
the union of two cones glued along their basis $[0,\infty]$, with vertices
corresponding to $\omega$ and $\omega^2$, cuts along
the geodesic arcs $[\omega,\infty]$ and $[\omega^2,\infty]$,
and a cusp at $\pi_o (\infty)$ (see Figure \ref{Figure1}).

\begin{lemma}\label{lemma1}
The group $\Gamma$ is an index two subgroup of $\Gamma (1)$ and coincides with
\begin{equation*}
\Gamma_o  = \left\{ M \in \Gamma (1):
M\equiv  \left( \begin{matrix} 1 & 0 \\ 0 & 1 \end{matrix}\right) ,\
 \left( \begin{matrix} 0 & 1 \\ 1& 1 \end{matrix}\right) \
\mbox{or}\
\left( \begin{matrix} 1 & 1 \\ 1 & 0 \end{matrix} \right) \pmod{2} \right\} .
\end{equation*}
\end{lemma}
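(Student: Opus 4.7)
The plan is to identify $\Gamma_o$ as the preimage of the cyclic subgroup $A_3$ inside $\operatorname{PSL}(2,\F_2)\cong S_3$ under reduction mod $2$, and then to prove the two inclusions $\Gamma\subseteq\Gamma_o$ and $\Gamma_o\subseteq\Gamma$ separately.

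First I would set up the reduction-mod-$2$ homomorphism $\pi_2:\Gamma(1)\to \operatorname{PSL}(2,\F_2)$, which is classically known to be surjective with kernel the principal congruence subgroup $\Gamma(2)$. A direct check shows that the three matrices listed in the statement are precisely $I$, $\sigma$, $\sigma^2$, where $\sigma$ is the image of $\left(\begin{smallmatrix} 0 & 1 \\ 1 & 1 \end{smallmatrix}\right)$ in $\operatorname{PSL}(2,\F_2)$ and has order $3$. These form the unique index-$2$ subgroup of $\operatorname{PSL}(2,\F_2)$, so $\Gamma_o=\pi_2^{-1}(\langle\sigma\rangle)$ is automatically a subgroup of $\Gamma(1)$ of index exactly $2$.

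For $\Gamma\subseteq\Gamma_o$ I would simply reduce the two generators: both $S$ and $ST^{-1}$ collapse to $\sigma$ modulo $2$, so the group they generate maps into $\langle\sigma\rangle$, i.e.\ into $\Gamma_o$.

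The reverse inclusion $\Gamma_o\subseteq\Gamma$ is the main obstacle. My plan is to use the coset decomposition $\Gamma_o=\Gamma(2)\sqcup\Gamma(2)\,S\sqcup\Gamma(2)\,S^2$, which is valid because $\pi_2$ sends these three cosets bijectively onto $\langle\sigma\rangle$; since $S\in\Gamma$, this reduces the task to showing $\Gamma(2)\subseteq\Gamma$. Invoking the classical fact that $\Gamma(2)$ is (freely) generated by $T=\left(\begin{smallmatrix} 1 & 2 \\ 0 & 1 \end{smallmatrix}\right)$ and $T'=\left(\begin{smallmatrix} 1 & 0 \\ 2 & 1 \end{smallmatrix}\right)$, it is enough to exhibit both as words in $S$ and $ST^{-1}$. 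The first is immediate because $T(z)=z+2$ is by definition one of the paper's generators of $\Gamma$; explicitly, $T=(ST^{-1})^{-1}S$. The second comes from the one-line matrix identity $T'=S\,T^{-1}\,S^{-1}$ in $\operatorname{PSL}(2,\Z)$, verified by direct multiplication. Combining these gives $\Gamma(2)\subseteq\Gamma$, hence $\Gamma_o\subseteq\Gamma$ and equality.
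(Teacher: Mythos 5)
Your argument is correct, but the second half takes a genuinely different route from the paper. Both proofs start the same way: reduce mod $2$, note that the three residue classes in the statement form the unique index-two (order-three) subgroup of $\operatorname{SL}(2,\F_2)\cong S_3$, so $[\Gamma(1):\Gamma_o]=2$, and check that the generators of $\Gamma$ reduce into it, giving $\Gamma\subseteq\Gamma_o$. For the reverse inclusion the paper argues geometrically: it observes that the Dirichlet domain ${\mathfrak F}$ of $\Gamma$ is the union of the standard fundamental domain ${\mathcal F}$ of $\Gamma(1)$ and $\left(\begin{smallmatrix} 0 & -1 \\ 1 & 0 \end{smallmatrix}\right){\mathcal F}$, so $[\Gamma(1):\Gamma]=2$, and then $\Gamma=\Gamma_o$ follows from the sandwich $\Gamma\subseteq\Gamma_o\subseteq\Gamma(1)$ by comparing indices. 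You instead argue algebraically: writing $\Gamma_o=\Gamma(2)\sqcup\Gamma(2)S\sqcup\Gamma(2)S^2$ (valid, since $\pi_2(\Gamma_o)=\langle\sigma\rangle$ with kernel the image of $\Gamma(2)$ in $\operatorname{PSL}(2,\Z)$ and $\pi_2(S)=\sigma$), you reduce the problem to $\Gamma(2)\subseteq\Gamma$, and settle it via the classical generation of $\Gamma(2)$ (mod $\pm I$, which is harmless in $\operatorname{PSL}$) by $T=\left(\begin{smallmatrix} 1 & 2 \\ 0 & 1 \end{smallmatrix}\right)$ and $T'=\left(\begin{smallmatrix} 1 & 0 \\ 2 & 1 \end{smallmatrix}\right)$, together with the correct identities $T=(ST^{-1})^{-1}S$ and $T'=ST^{-1}S^{-1}$, which I have checked. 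The trade-off: the paper's proof leans on the already-constructed fundamental domain and needs no presentation of $\Gamma(2)$, while yours avoids any geometry at the cost of importing the (nontrivial, though standard) fact about generators of $\Gamma(2)$ — a fact usually proved by exactly the kind of fundamental-domain or Euclidean-algorithm argument the paper uses directly, so the two proofs are of comparable depth; yours has the mild advantage of exhibiting explicit words in $S$ and $ST^{-1}$ for the generators of $\Gamma(2)$, which makes the inclusion $\Gamma(2)\subseteq\Gamma$ completely constructive.
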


\begin{proof}
The generators $S$ and $T$ are contained in the group $\Gamma_o$. Since $\lvert \operatorname{SL}(2,\Z_2)\rvert =6$, we have
$[ \Gamma (1):\Gamma_o ]=2$. Finally, we notice that ${{\mathfrak F}={\mathcal F} \cup
\left( \begin{smallmatrix} 0 & -1 \\ 1 & 0 \end{smallmatrix}\right) {\mathcal F}}$, where
${\mathcal F}=\big\{ z\in \H : \lvert z\rvert \geqslant 1, \lvert
\operatorname{Re} z \rvert \leqslant \frac{1}{2} \}$ is the standard fundamental domain for
$\Gamma(1) \backslash \H$. This yields $[\Gamma (1):\Gamma ]=2$ and $\Gamma =\Gamma_o$.
\end{proof}

The lift of the group $\Gamma$ to $\operatorname{GL}(2,\Z)$ played an important role in the
analysis of the renewal time for odd continued fractions, pursued by Vandehey and one of the authors in \cite{BV}.

\begin{figure}
\centering\includegraphics[width=9cm]{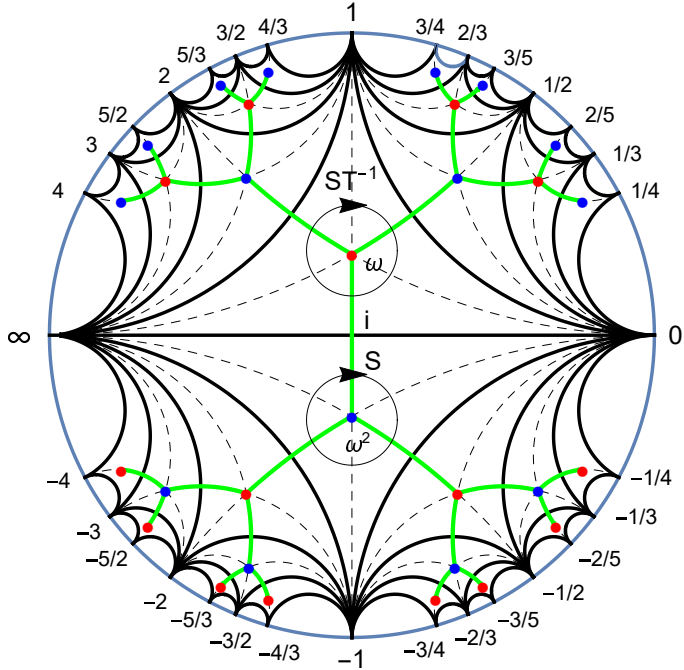}
\caption{The Farey tessellation in the disk model and the rotations $S$ and $ST^{-1}$}
\label{Figure3}
\end{figure}

Let $\AAA_o$ denote the set of geodesics $\gamma$ in $\H$ with endpoints satisfying
\begin{equation*}
(\gplus,\gneg) \in
\SSS_o :=\left( (1,\infty) \times (-I_G) \right) \cup \left( (-\infty,-1) \times I_G\right).
\end{equation*}

\begin{lemma}\label{Lemma2}
Every geodesic $\bar{\gamma}$ on $\MM_o$ lifts on $\H$ to a geodesic $\gamma \in \AAA_o$.
\end{lemma}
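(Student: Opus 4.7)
The lemma is equivalent to showing that the set $\SSS_o \subset (\R\cup\{\infty\})^2$ meets every orbit of the diagonal $\Gamma$-action on boundary endpoint pairs, since the lifts of a given $\bar\gamma$ on $\MM_o$ form one such orbit. My plan is to construct, for each pair $(\alpha,\beta)$ of endpoints of an arbitrary initial lift, an explicit $g\in\Gamma$ with $(g\alpha,g\beta)\in\SSS_o$, by alternating the translation $T(z)=z+2$ with the order-three rotations $S^{\pm1},(ST^{-1})^{\pm1}$ --- essentially mirroring the OCF/GCF algorithms of Section \ref{fractiontypes}.

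The translation step exploits the fact that $(-G,2-G)\cup(G-2,G)=(-G,G)$ has length $2G>2$, so some $n\in\Z$ places $\beta+2n$ in $(-G,G)$; the subinterval it lands in indicates whether to aim for the strip $R_+:=(1,\infty)\times(-G,2-G)$ (requiring $\alpha+2n>1$) or $R_-:=(-\infty,-1)\times(G-2,G)$ (requiring $\alpha+2n<-1$). If the matching inequality on $\alpha+2n$ holds, the pair is already in $\SSS_o$ and we are done. Otherwise both translated endpoints are bounded and necessarily cluster about one of the finite vertices $-1,0,1$ of the ideal quadrilateral $\Delta$. I would then apply the rotation among $\{S^{\pm1},(ST^{-1})^{\pm1}\}$ whose 3-cycle on $\{-1,0,1,\infty\}$ sends that vertex to $\infty$; the rotation fixes $\omega$ or $\omega^2$, blows up the cluster, and returns us to the translation step with a pair of strictly simpler continued-fraction complexity.

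The main point to establish is termination of the alternating procedure. The key observation is that a single translate-then-rotate cycle is precisely the $\Gamma$-realization of the inverse natural extension $\tT_o^{-1}$ of \eqref{eq2.4} under the identification of endpoint pairs with points of $\tOmega_o$ (up to the sign bookkeeping on $\{\pm1\}$): each cycle extracts one OCF digit $(a_i,\e_i)$ of the ``large'' endpoint and prepends one GCF digit to the ``small'' one. Since every irrational $x\in[0,1]$ has a unique convergent OCF expansion and every irrational $y\in I_G$ a unique convergent GCF expansion (Sections \ref{ocf}--\ref{gcf}), the iteration halts after finitely many cycles in $\SSS_o$. The principal technical obstacle is the bad-case analysis --- choosing the correct rotation and verifying it always makes strict progress --- which ultimately reduces to the standard contraction properties of $T_o$. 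Cuspidal geodesics (endpoints in $\Q\cup\{\infty\}$) have terminating OCF/GCF expansions, and for these the algorithm halts after finitely many steps by direct computation, with a final $T^n$ or $S^{\pm1}$ placing the leftover endpoint $\infty$ into the closure of $(1,\infty)$ or $(-\infty,-1)$.
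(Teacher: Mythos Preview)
Your approach is genuinely different from the paper's, and while the strategy could probably be pushed through, the proposal as written has a real gap in the termination argument. The paper's proof is a one-step geometric argument: it chooses a lift whose arc lies in the fundamental quadrilateral $\mathfrak{F}$ and does a four-case analysis according to which pair of edges of $\mathfrak{F}$ the arc joins. In each case the endpoint constraints place $(\gplus,\gneg)$ in $\SSS_o$ immediately, or after a single $2\Z$-translation, or (in the fourth case) after one application of $TS^{-1}$ followed by a translation. No iteration, no continued-fraction input.

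Your iterative reduction has two problems. First, the assertion that in the bad case both translated endpoints ``necessarily cluster about one of the finite vertices $-1,0,1$'' is not justified: if $\beta+2n$ lands in $(-G,G-2)$ (outside the overlap $(G-2,2-G)$), failure of the $R_+$ condition only gives $\alpha+2n\le 1$, with no lower bound at all, so there is no canonical vertex to rotate away and no obvious choice among $S^{\pm1},(ST^{-1})^{\pm1}$. Second, and more fundamentally, the termination argument via $\tT_o^{-1}$ is circular. The identification of a translate-then-rotate cycle with $\tT_o^{-1}$ (equivalently with $\rho_o^{-1}$) is exactly the content of Proposition~\ref{Lcommdiagodd}, and it lives on $\SSS_o$ (respectively $\tOmega_o$); outside $\SSS_o$ there is no OCF digit of the ``large'' endpoint to extract and no GCF digit of the ``small'' one to prepend, so convergence of those expansions says nothing about how many cycles it takes to \emph{enter} $\SSS_o$. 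To make your scheme rigorous you would need an independent monovariant (a height or word-length that strictly drops under each cycle), and setting that up is essentially redoing the reduction theory that the fundamental-domain argument gives for free.
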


\begin{proof}
Without loss of generality, we can take $\bar{\gamma}$ to be a positively
oriented geodesic arc in ${\mathfrak F}$ connecting $[\omega^2,\infty]$ to $[\omega,\infty]$,
$[0,\omega^2]$ to $[\omega,\infty]$, $[\omega^2,\infty]$ to $[0,\omega]$, or
$[0,\omega^2]$ to $[0,\omega]$. Note that $[0,1]\subset I_G =(G-2,G)$. There are four cases to consider.
First, $\gamma_{-\infty} <-1<1<\gamma_\infty$, and an appropriate
$2\Z$-translation gives $\gamma_{-\infty} \in -I_G$ and $\gamma_\infty >1$.
Second, $\gamma$ connects the arc $[-1,0]$ to $[\omega,\infty]$, hence
$-1<\gamma_{-\infty} <0<1<\gamma_\infty$. Third, we have
$\gamma_{-\infty} <-1<0<\gamma_\infty<1$ so $\gamma\in\AAA_o$.

Finally, in the fourth case $TS^{-1}$ maps $\bar{\gamma}$ to a geodesic arc connecting
$[\omega+1,\infty]$ to $[\omega,\infty]$, hence $\gamma_{-\infty} >2>0>\gamma_\infty$.
An appropriate $2\Z$-translation ensures that $\gamma_{-\infty} \in I_G$ and $\gamma_\infty <-1$.
\end{proof}

\subsection{Cutting sequences and odd/grotesque continued fraction expansions}\label{oddcut}

As described in the introduction, our coding of geodesics on $\MM_o$ refines
the Series coding.  An oriented geodesic $\gamma$ in $\H$ is cut into segments as it crosses triangles in the Farey tessellation $\F$. Each segment of the geodesic crosses two sides of a triangle in the tessellation. If the vertex where the two sides meet is on the left, we label the segment $L$, if it is on the right we label it $R$.
We use $\rL$ and $\rR$ when the geodesic is in a light cell and $\bL$ or $\bR$ for a dark cell. This way, we assign to every geodesic
in $\H$ with irrational endpoints an infinite two-sided sequence of symbols
$\rL$, $\bL$, $\rR$ and $\bR$, with alternating shades.

Next, we analyze in detail the connection between the GCF expansion of
$\gamma_{-\infty}$, the OCF expansion of $\gamma_\infty$, and the strings 
in \eqref{string_types_o}.

For every M\"obius transformation $\bar{\rho}$ leaving
$\SSS_o$ invariant, we still denote by
$\bar{\rho}$ the product map $\bar{\rho}\times \bar{\rho}$,
viewed as a transformation of $\SSS_o$.
To every geodesic $\gamma \in \AAA_o$ we associate the positively
oriented geodesic arc $[\xi_\gamma,\eta_\gamma]$, where
\begin{equation*}
\xi_\gamma:=\begin{cases}
\gamma \cap [1,\infty] & \mbox{\rm if $\gamma_\infty >1$} \\
\gamma \cap [-1,\infty] & \mbox{\rm if $\gamma_\infty <-1$}
\end{cases}
\quad \mbox{\rm and} \quad
\eta_\gamma:=\begin{cases}
\gamma \cap [a_1,a_1+\e_1] & \mbox{\rm if $\gamma_\infty >1$} \\
\gamma \cap [-a_1,-a_1-\e_1] & \mbox{\rm if $\gamma_\infty <-1,$}
\end{cases}
\end{equation*}
with $(a_1,\e_1)=(a_1(\gamma_\infty),\e_1(\gamma_\infty))$.

When $\gplus>1$, we write $\gplus=[\![(a_1,\e_1);(a_2,\e_2),\ldots ]\!]_o$, $\gneg=-\llangle
(a_0,\e_0),(a_{-1},\e_{-1}), \ldots \rrangle_o \in -I_G$. When $\gplus<-1$, $\gplus=-[\![(a_1,\e_1);(a_2,\e_2),\ldots ]\!]_o$, $\gneg=\llangle
(a_0,\e_0),(a_{-1},\e_{-1}), \ldots \rrangle_o \in I_G$. Four cases will occur:

\begin{figure}[h]
\centering\includegraphics*[width=8cm]{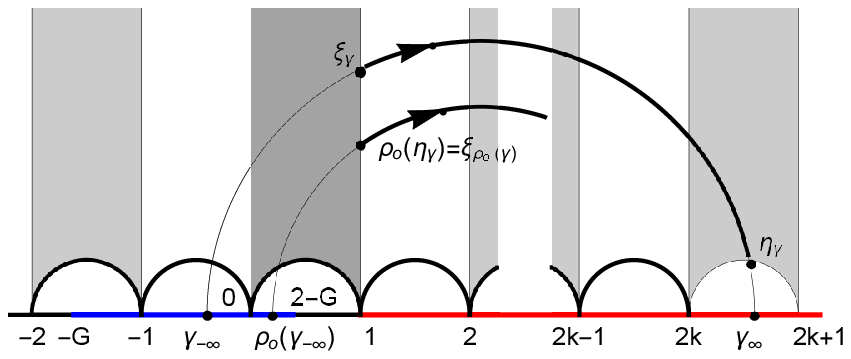}
\includegraphics*[width=7.5cm]{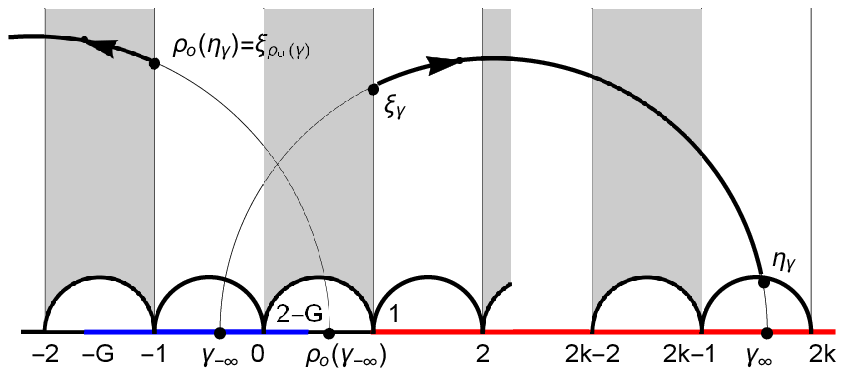}
\caption{The strings $\xi_\gamma (\rL \bL)^{k-1} \rL \bR \eta_\gamma$ and
$\xi_\gamma (\rL \bL)^{k-1} \rR \eta_\gamma$ }
\label{Figure4}
\end{figure}

(A) $\gamma_\infty \in(2k,2k+1)$, $\e_1=-1$, $a_1=2k+1$. The M\" obius transformation
$
\rho_o (x)=\frac{1}{a_1-x}
$
belongs to $\Gamma$ and
\begin{equation}\label{eq3.1}
\begin{split}
\rho_o  \big( [\![(a_1,\e_1); & (a_2,\e_2),(a_3,\e_3),\ldots ]\!]_o , -\llangle
(a_0,\e_0),(a_{-1},\e_{-1}),(a_{-2},\e_{-2}), \ldots \rrangle_o \big) \\ & =
\big( [\![(a_2,\e_2);(a_3,\e_3),(a_4,\e_4),,\ldots ]\!]_o ,
-\llangle (a_1,\e_1),(a_0,\e_0),(a_{-1},\e_{-1}),\ldots \rrangle_o \big) .
\end{split}
\end{equation}
In this situation $\rho_o$ transforms the arc $[\xi_\gamma,\eta_\gamma]$
of $\gamma$ connecting the geodesics
$[1,\infty]$ and $[a_1-1,a_1]$ into an arc connecting $[0,\frac{1}{a_1-1}]$ with
$[1,\infty]$. Following the orientation of $\gamma$, we assign the string $\xi_\gamma (\rL \bL)^{k-1}\rL \bR \eta_\gamma$ to the
arc $[\xi_\gamma,\eta_\gamma]$ (see Figure \ref{Figure4}).

(B) $\gamma_\infty \in(2k-1,2k),
\e_1=+1$, $a_1=2k-1$. The M\" obius transformation
$
\rho_o (x)=\frac{1}{a_1-x}
$
belongs to $\Gamma$ and
\begin{equation}\label{eq3.2}
\begin{split}
\rho_o  \big( [\![(a_1,\e_1); & (a_2,\e_2),(a_3,\e_3),\ldots ]\!]_o , -\llangle
(a_0,\e_0),(a_{-1},\e_{-1}),(a_{-2},\e_{-2}), \ldots \rrangle_o \big) \\ & =
\big( -[\![(a_2,\e_2);(a_3,\e_3),(a_4,\e_4),\ldots ]\!]_o ,
\llangle (a_1,\e_1),(a_0,\e_0),(a_{-1},\e_{-1}),\ldots \rrangle_o \big) .
\end{split}
\end{equation}
In this situation $\rho_o$ transforms the arc $[\xi_\gamma,\eta_\gamma]$
of $\gamma$ connecting the geodesics $[1,\infty]$ and $[a_1,a_1+1]$ into an arc connecting $[0,\frac{1}{a_1-1}]$ with
$[-1,\infty]$. Following the orientation of $\gamma$, to the
arc $[\xi_\gamma,\eta_\gamma]$ we assign the string $\xi_\gamma (\rL \bL)^{k-1} \rR \eta_\gamma$
(see Figure \ref{Figure4}).

(C) $\gamma_\infty \in(-2k-1,-2k), \e_1=-1$, $a_1=2k+1$. The M\" obius transformation
$
\rho_o (x)=\frac{1}{-a_1-x}
$
belongs to $\Gamma$ and
\begin{equation}\label{eq3.3}
\begin{split}
\rho_o  \big( -[\![(a_1,\e_1); & (a_2,\e_2),(a_3,\e_3),\ldots ]\!]_o , \llangle
(a_0,\e_0),(a_{-1},\e_{-1}),(a_{-2},\e_{-2}), \ldots \rrangle_o \big) \\ & =
\big( -[\![(a_2,\e_2);(a_3,\e_3),(a_4,\e_4),\ldots ]\!]_o ,
\llangle (a_1,\e_1),(a_0,\e_0),(a_{-1},\e_{-1}),\ldots \rrangle_o \big) .
\end{split}
\end{equation}
In this situation $\rho_o$ transforms the arc $[\xi_\gamma,\eta_\gamma]$
of the geodesic $\gamma$ connecting the geodesics
$[-1,\infty]$ and $[-a_1,-a_1+1]$ into an arc connecting $[\frac{-1}{a_1-1},0]$ with
$[-1,\infty]$. Following the orientation of $\gamma$, to the
arc $[\xi_\gamma,\eta_\gamma]$ we assign the string $\xi_\gamma (\bR \rR)^{k-1} \bR \rL\eta_\gamma$
(see Figure \ref{Figure6}).

\begin{figure}
\centering\includegraphics*[width=8cm]{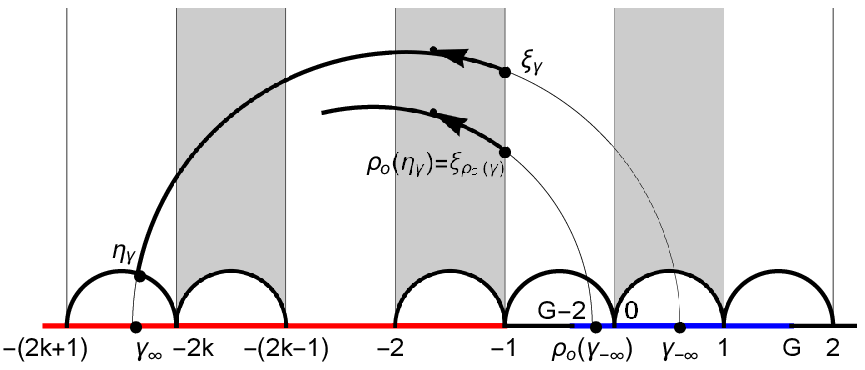}
\centering\includegraphics*[width=7.7cm]{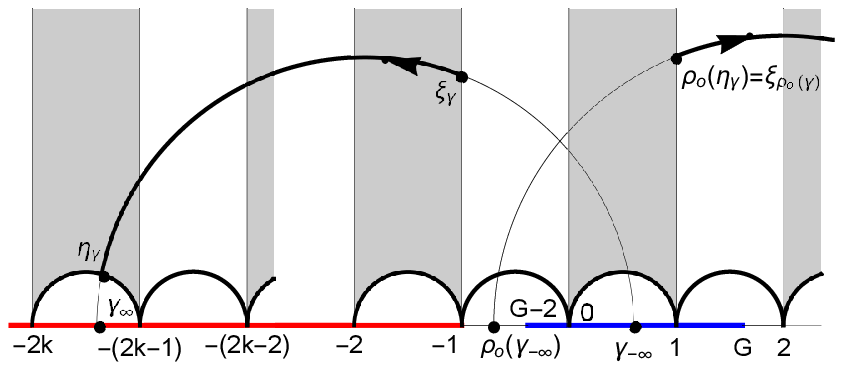}
\caption{The strings $\xi_\gamma (\bR \rR)^{k-1} \bR \rL\eta_\gamma$ and
$\xi_\gamma (\bR \rR)^{k-1} \bL \eta_\gamma$ }
\label{Figure6}
\end{figure}

(D) $\gamma_\infty \in (-2k,-2k+1), \e_1=+1$, $a_1=2k-1$. The M\" obius transformation
$
\rho_o (x)=\frac{1}{-a_1-x}
$
belongs to $\Gamma$ and
\begin{equation}\label{eq3.4}
\begin{split}
\rho_o  \big( -[\![(a_1,\e_1); & (a_2,\e_2),(a_3,\e_3),\ldots ]\!]_o , \llangle
(a_0,\e_0),(a_{-1},\e_{-1}),(a_{-2},\e_{-2}), \ldots \rrangle_o \big) \\ & =
\big( [\![(a_2,\e_2);(a_3,\e_3),(a_4,\e_4),\ldots ]\!]_o ,
-\llangle (a_1,\e_1),(a_0,\e_0),(a_{-1},\e_{-1}),\ldots \rrangle_o \big) .
\end{split}
\end{equation}
In this situation $\rho_o$ transforms the arc $[\xi_\gamma,\eta_\gamma]$
of the geodesic $\gamma$ connecting the geodesics
$[-1,\infty]$ and $[-a_1-1,-a_1]$ into an arc connecting $[\frac{-1}{a_1-1},0]$ with
$[1,\infty]$. Following the orientation of $\gamma$, to the
arc $[\xi_\gamma,\eta_\gamma]$ we assign the string $\xi_\gamma (\bR \rR)^{k-1} \bL \eta_\gamma$
(see Figure \ref{Figure6}).

Summarizing \eqref{eq3.1}-\eqref{eq3.4}, we obtain the following general formula for the action of $\rho_o$ on $\SSS_o$,
where $\epsilon =+1$ in cases A and B, and $\epsilon =-1$ in cases C and D:
\begin{equation}\label{eq3.5}
\begin{split}
\rho_o & \big( \e [\![ (a_1,\e_1);(a_2,\e_2),(a_3,\e_3),\ldots ]\!]_o, -\e
\llangle (a_0,\e_0),(a_{-1},\e_{-1}),(a_{-2},\e_{-2}),\ldots \rrangle_o) \\
& = (-\e_1) \big( \e [\![ (a_2,\e_2);(a_3,\e_3),(a_4,\e_4),\ldots ]\!]_o,
-\e \llangle (a_1,\e_1),(a_0,\e_0),(a_{-1},\e_{-1}),\ldots \rrangle_o \big),
\end{split}
\end{equation}
with inverse
\begin{equation*}
\begin{split}
\rho_o^{-1} & \big( \e [\![ (a_1,\e_1);(a_2,\e_2),(a_3,\e_3),\ldots ]\!]_o, -\e
\llangle (a_0,\e_0),(a_{-1},\e_{-1}),(a_{-2},\e_{-2}),\ldots \rrangle_o) \\
& = (-\e_0) \big( \e [\![ (a_0,\e_0);(a_1,\e_1),(a_2,\e_2),\ldots ]\!]_o,
-\e \llangle (a_{-1},\e_{-1}),(a_{-2},\e_{-2}),(a_{-3},\e_{-3}),\ldots \rrangle_o \big).
\end{split}
\end{equation*}
This also gives that $\rho_o$ reflects across the imaginary axis exactly when $\e_1=+1$, so that $\rho_o$ agrees with
the transformation $\rho$ from \cite{Ser} exactly where the RCF and OCF agree.
We also get that cases A and C are followed by A or B, and cases B and D are followed by cases C or D.

\begin{proposition}\label{Lcommdiagodd}
The map $\rho_o :\SSS_o \rightarrow \SSS_o$ is invertible, and the diagram
$$
\xymatrix{ \mbox{$\SSS_o$} \ar@{->} [r]^(.5){\rho_o}
\ar@{->} [d]_(.4) {J_o}  & \mbox{$\SSS_o$} \ar@{->} [d]^(.4) {J_o} \\
\mbox{$\tOmega_o$} \ar@{->} [r] ^(.5){\tT_o} & \tOmega_o } $$
commutes, where $J_o:\SSS_o \rightarrow \tOmega_o$ is the invertible map defined by
\begin{equation*}
J_o (x,y):=\operatorname{sign} x (1/x,-y,1) = \begin{cases}
(1/x,-y,1) & \mbox{\rm if $x> 1$, $y\in -I_G$} \\
(-1/x,y,-1) & \mbox{\rm if $x< -1$, $y\in I_G.$}
\end{cases}
\end{equation*}
\end{proposition}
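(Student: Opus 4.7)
The plan is to verify three things: that $J_o$ and $\rho_o$ are bijections, and then that $\tT_o\circ J_o=J_o\circ\rho_o$. The invertibility of $J_o$ is routine. On the piece $\{x>1,\,y\in -I_G\}$ it is $(x,y)\mapsto(1/x,-y,1)$ and on $\{x<-1,\,y\in I_G\}$ it is $(x,y)\mapsto(-1/x,y,-1)$, and these two pieces map bijectively onto $\Omega_o\times\{1\}$ and $\Omega_o\times\{-1\}$ respectively. One writes down the obvious piecewise inverse $J_o^{-1}(u,v,1)=(1/u,-v)$, $J_o^{-1}(u,v,-1)=(-1/u,v)$, and checks the compositions.

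For the invertibility of $\rho_o$ on $\SSS_o$, the inverse is already written down symbolically in the formula displayed just after \eqref{eq3.5}. At the level of M\"obius maps, the inverse of $x\mapsto 1/(\pm a_1-x)$ is $x\mapsto \pm a_1-1/x$, which matches the symbolic inverse, so the only combinatorial point to check is that the partition of $\SSS_o$ into cases A--D is compatible with the partition of the image. This is exactly the closing observation in the excerpt: cases A, C land in the half with first coordinate $>1$, and cases B, D in the half with first coordinate $<-1$.

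The heart of the proof is the commutativity of the diagram, which I would establish by a direct case analysis based on the consolidated formula \eqref{eq3.5}. Fix $(x,y)\in\SSS_o$ with $x>1$, so $\e=+1$, $x=[\![(a_1,\e_1);(a_2,\e_2),\ldots]\!]_o$ and $y=-\llangle(a_0,\e_0),(a_{-1},\e_{-1}),\ldots\rrangle_o$. Then $J_o(x,y)=(1/x,-y,1)$, and unpacking $\bT_o$ yields
\begin{equation*}
\tT_o(J_o(x,y))=\bigl([\![(a_2,\e_2),\ldots]\!]_o,\ \llangle(a_1,\e_1),(a_0,\e_0),\ldots\rrangle_o,\ -\e_1\bigr).
\end{equation*}
On the other side, \eqref{eq3.5} gives $\rho_o(x,y)=(-\e_1)\bigl([\![(a_2,\e_2);\ldots]\!]_o,\,-\llangle(a_1,\e_1),(a_0,\e_0),\ldots\rrangle_o\bigr)$, and the $\operatorname{sign}$ in the definition of $J_o$ absorbs this prefactor: the first two coordinates of $J_o(\rho_o(x,y))$ match those above, while the third equals $+1$ in case A (where $\e_1=-1$) and $-1$ in case B (where $\e_1=+1$), that is, exactly $-\e_1$ in both subcases.

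The $x<-1$ half (cases C and D) proceeds identically with $\e=-1$ in \eqref{eq3.5}. The main and only obstacle is this sign bookkeeping: the identity holds precisely because the three sign-carrying pieces --- the overall prefactor $-\e_1$ in \eqref{eq3.5}, the $\operatorname{sign}(x)$ in $J_o$, and the factor $-\e_1(x)\e$ in $\tT_o$ --- conspire to cancel. No additional ideas are required beyond the careful matching of these three signs.
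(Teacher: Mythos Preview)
Your proposal is correct and follows essentially the same approach as the paper's proof: both compute $J_o\circ\rho_o$ and $\tT_o\circ J_o$ symbolically using \eqref{eq3.5} and the definition of $\tT_o$, then match the three coordinates. The only difference is cosmetic---you split into the two halves $\e=+1$ and $\e=-1$ while the paper handles both at once with the parameter $\e$---and your third-coordinate bookkeeping ($-\e_1\e$) is in fact cleaner than the paper's displayed computation, which contains a typo ($-\e_0\e$ should read $-\e_1\e$).
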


\begin{proof}
Set $x= \e [\![ (a_1,\e_1); (a_2,\e_2),\ldots ]\!]_o$,
$y=-\e \llangle (a_0,\e_0),(a_{-1},\e_{-1}),\ldots \rrangle_o$ with
$\e =\pm 1$, so that $(x,y)\in \SSS_o$.
Using formulas \eqref{eq3.5} and \eqref{eq2.4}, we get that
\begin{equation*}
\begin{split}
J_o \rho_o (x,y) & = J_o \big( -\e_1 \e [\![(a_2,\e_2); (a_3,\e_3),\ldots ]\!]_o,
\e_1 \e \llangle  (a_1,\e_1),(a_0,\e_0),\ldots \rrangle_o \big) \\
& = \big( [\![(a_2,\e_2),(a_3,\e_3),\ldots ]\!]_o, \llangle (a_1,\e_1),(a_0,\e_0),\ldots \rrangle_o ,-\e_0 \e  \big) \\
& = \tT_o \big( [\![(a_1,\e_1),(a_2,\e_2),\ldots ]\!]_o,
\llangle (a_0,\e_0),(a_{-1},\e_{-1}),\ldots \rrangle_o ,\e \big)
= \tT_o J_o (x,y). \qedhere
\end{split}
\end{equation*}
\end{proof}

\begin{cor}\label{cor4}
If $\alpha =[\![\overline{(a_1,\e_1);(a_2,\e_2), \ldots,(a_r,\e_r)}]\!]_o>1$ and
$\beta=-\llangle \overline{((a_r,\e_r),\ldots,(a_1,\e_1)} \rrangle_o\in -I_G$, then
\begin{itemize}
\item[(i)] $\  \rho_o^r (\alpha,\beta)=(-\e_1)\cdots (-\e_r) (\alpha,\beta)$.
\item[(ii)] $\  \rho_o^{2r} (\alpha,\beta) =(\alpha,\beta)$.
\end{itemize}
\end{cor}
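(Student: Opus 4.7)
My plan is to lift the problem to $\tOmega_o$ via the conjugation $J_o$ from Proposition \ref{Lcommdiagodd}, reducing iteration of $\rho_o$ on $\SSS_o$ to iteration of the shift $\tT_o$; pure periodicity of the OCF and GCF expansions then makes the iteration transparent, and the sign in (i) appears as the cumulative product of the sign flips tracked by the third coordinate of $\tOmega_o$.

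For (i), I start by observing that, since $\alpha = [\![\overline{(a_1,\e_1);(a_2,\e_2),\ldots,(a_r,\e_r)}]\!]_o > 1$, a direct matching of the branches of the odd Gauss map $T_o$ shows that $1/\alpha$ has the purely periodic OCF expansion $[\![\overline{(a_1,\e_1),(a_2,\e_2),\ldots,(a_r,\e_r)}]\!]_o$; meanwhile, by hypothesis, $-\beta = \llangle \overline{(a_r,\e_r),\ldots,(a_1,\e_1)}\rrangle_o \in I_G$ is already purely periodic of period $r$ under $\tau_o$. Because $\alpha > 1$ and $\beta \in -I_G$, the definition of $J_o$ yields $J_o(\alpha,\beta) = (1/\alpha,-\beta,1)$. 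By formula \eqref{eq4}, each application of $\tT_o$ shifts the OCF coordinate one digit forward, prepends that digit to the GCF coordinate, and multiplies the $\{\pm 1\}$ coordinate by $-\e_1$ of the current leading OCF digit. Iterating $r$ times and using the pure periodicity of both expansions returns the first two coordinates to $(1/\alpha,-\beta)$, while the third is multiplied by $\epsilon := (-\e_1)(-\e_2)\cdots(-\e_r)$. The commutative diagram of Proposition \ref{Lcommdiagodd} then gives $J_o(\rho_o^r(\alpha,\beta)) = \tT_o^r J_o(\alpha,\beta) = (1/\alpha,-\beta,\epsilon)$, and reading off the appropriate branch of $J_o^{-1}$ produces $\rho_o^r(\alpha,\beta) = \epsilon(\alpha,\beta)$, which is (i).

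For (ii), the case $\epsilon = 1$ is immediate from (i). When $\epsilon = -1$, the point $(-\alpha,-\beta) = \rho_o^r(\alpha,\beta)$ lies in $\SSS_o$, and the second branch of $J_o$ gives $J_o(-\alpha,-\beta) = (1/\alpha,-\beta,-1)$. A further $r$ iterations of $\tT_o$ again multiply the third coordinate by $\epsilon = -1$, producing $(1/\alpha,-\beta,1)$, whence $\rho_o^r(-\alpha,-\beta) = J_o^{-1}(1/\alpha,-\beta,1) = (\alpha,\beta)$, and therefore $\rho_o^{2r}(\alpha,\beta) = (\alpha,\beta)$. The one mildly delicate step in the argument is the opening identification of the purely periodic OCF expansion of $1/\alpha$ from the semicolon-expansion of $\alpha$; after that, everything reduces to careful tracking of the sign $\epsilon$ through the two branches of $J_o$ and $J_o^{-1}$.
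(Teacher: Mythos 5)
Your proof is correct and follows essentially the same route as the paper: both lift via $J_o$ and the commutative diagram of Proposition \ref{Lcommdiagodd}, track the sign in the third coordinate of $\tOmega_o$ under $r$ iterations of the shift $\tT_o$ to get (i), and handle the case $(-\e_1)\cdots(-\e_r)=-1$ of (ii) by applying the same computation to $(-\alpha,-\beta)$. The only cosmetic difference is that you flag the identity $1/\alpha=[\![\overline{(a_1,\e_1),\ldots,(a_r,\e_r)}]\!]_o$ as delicate, whereas it is immediate from the definition $[\![(a_1,\e_1);(a_2,\e_2),\ldots]\!]_o=a_1+\e_1[\![(a_2,\e_2),\ldots]\!]_o$.
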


\begin{proof}
To check (i), we use Proposition \ref{Lcommdiagodd} and compute
\begin{equation*}
\begin{split}
J_0^{-1} \tT_o^{r} J_0 (\alpha,\beta) & =J_0^{-1} \tT_o^{r} (1/\alpha,-\beta,1) =
J_0^{-1} \big( 1/\alpha,-\beta,(-\e_1)\cdots (-\e_r)\big)  =(-\e_1)\cdots (-\e_r) (\alpha,\beta) .
\end{split}
\end{equation*}

(ii) We consider the case $(-\e_1)\cdots (-\e_r)=-1$,
when we use Proposition \ref{Lcommdiagodd} and
\begin{equation*}
\begin{split}
J_o^{-1} \tT_o^{r} J_o (-\alpha,-\beta) & =J_o^{-1} \tT_o^{r} (1/\alpha,-\beta,-1)
=J_o^{-1} \big( 1/\alpha,-\beta,-(-\e_1)\cdots (-\e_r) \big) \\
& =J_o^{-1} (1/\alpha,-\beta,1) =(\alpha,\beta). \qedhere
\end{split}
\end{equation*}
\end{proof}

Finally, we notice the equality
\begin{equation}\label{3.6}
\bT_o^{-1} (u,v)=\bigg( -\frac{\operatorname{sign}(v)}{\rho_o (1/u)},\operatorname{sign} (v) \rho_o (-v)\bigg),
\quad \forall (u,v)\in\Omega_o \cap (\R\setminus \Q)^2 .
\end{equation}

\section{Connection with cutting sequence and RCF}\label{cut}
\subsection{Odd continued fractions}\label{cutodd}
We now explore the connection between the cutting sequences of the regular continued fractions and the odd continued fractions. Here, we
use $x$ to mark the imaginary axis, as in \cite{Ser}, and $\xi_{\gamma}=\gamma\cap \pm [1,\infty]$ as defined above.

In cases $A$ and $B$, we get the cutting sequence $\ldots xL^{n_1}R^{n_2}L^{n_3}\ldots$ with regular continued fraction expansion $[n_1;n_2,n_3,\ldots]$. Without coloring, this corresponds to $\ldots L \x L^{n_1-1}R^{n_2}L^{n_3}\eta_\gamma\ldots$. We have two cases to consider for the first digit of the odd continued fraction expansion.

\begin{description}

\item[(A)] $n_1=2k$ is even, and $\gplus\in(2k,2k+1)$. This gives the cutting sequence $\ldots \x (\rL \bL)^{k-1}\rL \bR\eta_\gamma$, and we get $(2k+1,-1)=(a_1,\e_1)$. The next digit is represented by $L^n R$ for $n\geqslant 0$.
When $n_2>1$, the next digit is $L^0\R$, corresponding to $(1,+1)$. This gives the cutting sequence $\ldots \x(\rL \bL)^{k-1}\rL \bR \eta_\gamma\rR\ldots$. This corresponds to \[2k+\cfrac{1}{n_2+z}=2k+1-\cfrac{1}{1+\cfrac{1}{n_2-1+z}}.\]
When $n_2=1$, we proceed with $\ldots \x (\rL \bL)^{k-1}\rL \bR\eta_\gamma \rL\ldots$. This corresponds to
\[2k+\cfrac{1}{1+\frac{1}{n_3+z}}=2k+1+\frac{-1}{n_3+1+z}.\]
These equalities correspond to the singularization and insertion algorithm
introduced by Kraaikamp in \cite{Kra} and explicitly computed for the OCF in
Masarotto's master's thesis \cite{Mas} (see also \cite{HK}).
This algorithm is based on the identity
\[a+\cfrac{\e}{1+\frac{1}{b+z}}=a+\e+\frac{-\e}{b+1+z}
\quad \mbox{\rm where $\epsilon \in \{ \pm 1\}$.}
\]

\item[(B)] $n_1=2k-1$ is odd, and $\gplus\in(2k-1,2k)$. This gives the cutting sequence $\ldots \x (\rL \bL)^{k-1}\rR\eta_\gamma$, and we get $(2k-1,+1)=(a_1,\e_1)$. The next digit is represented by $R^nL$, where $n\geqslant 0$.  If $n_2=1$, we have $ \ldots \x (\rL \bL)^{k-1}\R\eta_\gamma \bL \ldots$, and the next digit corresponds to $R^0\bL$, which gives $(1,+1)$.
\end{description}
Strings starting with $\bR$ are treating similarly, with $(\bR\rR)^{k-1}\bL$ corresponding to $(2k-1, +1)$ and $(\bR\rR)^{k-1}\bR\rL$  to $(2k+1, -1)$.

Geodesics of Type C and D can be classified similarly. In this case, we get the cutting sequence $\ldots xR^{n_1}L^{n_2}R^{n_3}\ldots$, where $\gplus=-[n_1;n_2,n_3,\ldots]$. This gives, without coloring, the cutting sequence  $\ldots R \x R^{n_1-1}L^{n_2}R^{n_3}\ldots$, and we interpret the strings in the same way as above.

\subsection{Grotesque continued fractions}\label{grot}
The grotesque continued fractions are the dual continued fraction expansion of
the odd continued fractions, which changes the restriction on the digits.
That is, for
$\e_1 /(a_1+ \e_2 /\ldots)$,
the odd continued fractions require $a_i+\e_{i+1}\geqslant 2$, and the grotesque require $a_i+\e_{i}\geqslant 2$.
This means we need a different insertion and singularization algorithm to convert regular continued fractions to
grotesque continued fractions that the one used for odd continued fractions. As with Series' description of the regular
continued fractions \cite{Ser}, the forward endpoints are read from left to right, but the backwards endpoints are
read from right to left. Thus, for the odd continued fractions, we can read the strings one at a time, but for
grotesque continued fractions, we must also consider whether the preceding string would be valid.

We consider cases A, B, C, and D similar to those above. To stay consistent with how we normally read, we say that
the string ends in the letter on the right. The preceding string is the string to the left of the one we are
considering.

\begin{description}
\item[(A)] $\gplus>1, \gneg\in(0,2-G)$, and $\e_0=-1$.
We get the cutting sequence  $\ldots (\rL\bL)^{k-1}\rL\bR \x\ldots$ and $(a_0,\e_0)=(2k+1,-1)$ as in Figure \ref{Figure4}.
The preceding string must end in $\bR$ or $\bL$, as in case A or B.

\item[(B)] $\gplus>1, \gneg\in(-G,0),$ and $\e_0=+1$.
We get the cutting sequence $\ldots (\bR\rR)^{k-1}\bL\x\ldots$ and $(a_0,\e_0)=(2k-1,+1)$, as in Figure \ref{Figure4}.
Note that we cannot have $a_0=1$.
The preceding string must end in $\R$ or $\rL$, as in case C or D.

\item[(C)] $\gplus<-1,\gneg\in(G-2,0),$ and $\e_0=-1$.
We get the cutting sequence $\ldots (\bR\rR)^{k-1}\bR\rL\x\ldots$ and $(a_0,\e_0)=(2k+1,-1)$
as in the image on the left of Figure \ref{Figure6}.
The preceding string must end in $\R$ or $\rL$, as in case C or D.

\item[(D)] $\gplus<-1, \gneg\in(0, G),$ and $\e_0=+1$.
We get the cutting sequence $\ldots (\rL\bL)^{k-1}\rR \x\ldots$ and $(a_0,\e_0)=(2k-1,+1)$
as in the image on the right of Figure \ref{Figure6}.
The preceding string must end in $\bR$ or $\bL$, as in case A or B.

\end{description}

To illustrate the difference between odd and grotesque continued fraction expansions, we consider two example strings.
First, note that for case C, we have the RCF cutting sequence $\ldots L^{n_{-2}} R^{n_{-1}}L^{n_0}x R\ldots$ for the RCF
$[n_0,n_{-1},n_{-2},\ldots]$, where $x=\gamma\cap [0,\infty]$. If we ignore coloring, this corresponds to the string $\ldots L^{n_{-2}} R^{n_{-1}}L^{n_0-1}R \x\ldots$.
We compare the GCF expansions for $[2,1,2,\ldots]$ and $[2,2,2,\ldots]$, as well as the OCF expansion of $[2;2,2,\ldots]$.

In the first case, we have the string $\ldots\rR\bL\rL\bR\rL\bL\rR \x\ldots$. Following the above rules, we get the grouping $\ldots\rR\bL(\rL\bR)(\rL\bL\rR)\x\ldots$, and
$[2,1,2,\ldots]=\llangle (3,+1),(3,-1),\ldots\rrangle_o$.

In the second case, we have $\ldots\bR\rL\bL\rR\bR\rL\bL\rR\x\ldots$, and $[2,2,2,\ldots]=\llangle (1,+1),(1,+1),(3,-1),\ldots\rrangle_o$. We need to change the grouping for the first digit to make the second digit an allowable string, since $\bR$ must be preceded by $(\rL\bL)^{k-1}\rL$. Thus, we get
$\dots\bR\rL\bL\rR(\bR\rL)(\bL)(\rR)\x\dots$, where the final grouping is determined based on whether the previous letter is $\rL$ or $\rR$.

Finally, we contrast this with the regular continued fraction $\frac{1}{\gneg}=[2;2,2,\ldots]$. The corresponding cutting sequence $\ldots\x (\rL\bR)(\rR)( \bL )\rL\bR \ldots$ gives the OCF. We can see the connection between the RCF, GCF, and OCF by noting that
\[2+\cfrac{1}{2+z}=1+\cfrac{1}{1-\cfrac{1}{3+z}}=3-\cfrac{1}{1+\cfrac{1}{1+z}}.
\]

\section{Applications}
\subsection{Invariant measures}\label{invmeas}
Section \ref{oddseq} provides  a cross-section $\XX$ for the geodesic flow on $T_1 (\MM_o)$
consisting of those elements $(\xi_\gamma,u_\gamma)$ with base point $\xi_\gamma$ on
$\pi_o (\pm 1 +i\R)$ such that the cutting sequence of $\pi_o(\gamma)$ is broken
into segments as in \eqref{string_types_o}, and the unit vector $u_\gamma$ points along the geodesic.
Proposition \ref{Lcommdiagodd} shows that the first return map
of the geodesic flow to $\XX$ corresponds to the transformation $\tT_o$ on $\tilde{\Omega}_o$.

As in \cite{Ser}, it is convenient to express $(x,u)\in T_1(\H)$ in coordinates
$(\alpha,\beta,t)$ given by the endpoints $\alpha,\beta$ of the geodesic $\gamma(u)$
through $u$ and the distance $t$ from the midpoint of $\gamma(u)$ to $u$. It is possible to transform
the usual measure Haar measure $y^{-2} dx dy d\theta$ on $\H \times S^1 \cong T_1 (\H) \cong \operatorname{PSL}(2,\R)$
to $(\alpha-\beta)^{-2} d\alpha d\beta dt$.
Section 3 shows that $\SSS_o$ and $\XX$ are naturally identified, up to a null-set,
by mapping
$\gamma\in\SSS_o$ with irrational endpoints $(\gamma_\infty,\gamma_{-\infty})\in \SSS_o$ to
$(\xi_\gamma,u_\gamma)\in \XX_o$ as above.
The first return map to $\XX$ is subsequently identified with $\rho_o$, acting on $\SSS_o$
with corresponding invariant measure $\mu=(\alpha-\beta)^{-2} d\alpha d\beta$.
We define $\pi_o,\pi_1,\pi_2$ on $\tilde{\Omega}_o$ by $\pi_o (x,y,\epsilon)=(x,y), \pi_1 (x,y,\epsilon)=x$, and $\pi_2 (x,y,\epsilon)=y$.
The push-forward of $\mu$ under
$\pi_o\circ J_o$ provides a $\bar{T}_o$-invariant measure $\bar{\mu}_o$ on $\Omega_o$. The push-forward of $\mu$ under
$\pi_1 \circ J_o$ provides a $T_o$-invariant measure $\mu_o$ on $(0,1)$. Finally, the push-forward of $\mu$ under
$\pi_2 \circ J_o$
 provides a $\tau_o$-invariant
measure $\nu_o$ on $I_G$.
For every rectangle $E=[a,b]\times [c,d] \subset \Omega_o$, we have
\[(\pi_o \circ J_o)^{-1}(E) =[b^{-1},a^{-1}] \times [-d,-c] \cup
[-a^{-1},-b^{-1}] \times [c,d] \qquad \mbox{\rm and} \]
\begin{equation*}
\bar{\mu}_o (E) =
\int_{1/b}^{1/a} \int_{-d}^{-c} \frac{d\alpha\, d\beta}{(\alpha-\beta)^2} +
\int_{-1/a}^{-1/b} \int_c^d \frac{d\alpha\, d\beta}{(\alpha-\beta)^2}
= 2\iint_E \frac{dx\, dy}{(1+xy)^2} ,
\end{equation*}
showing that $\bar{\mu}_o =(1+xy)^{-2}dx dy$ is a finite
$\bar{T}_o$-invariant measure.
Using $\frac{1}{G}=G-1$ and $\frac{1}{2-G}=G+1$,
we see that for every interval $[a,b]\subset (0,1)$,
\begin{equation*}
\begin{split}
\mu_o ([a,b]) & = \bar{\mu}_o \big( [1/b,1/a] \times (-I_G) \cup [-1/a,-1/b] \times I_G \big) \\
& = \int_{1/b}^{1/a} \int_{-G}^{2-G} \frac{d\alpha\, d\beta}{(\alpha-\beta)^2} +
\int_{-1/a}^{-1/b} \int_{G-2}^G \frac{d\alpha\, d\beta}{(\alpha-\beta)^2} \\
& = \int_a^b \int_{-G}^{2-G} \frac{1}{u^2}\cdot \frac{du\, dv}{(1/u-v)^2} +
\int_a^b \int_{G-2}^G \frac{1}{u^2} \cdot \frac{du\, dv}{(-1/u-v)^2} \\
& =2 \int_a^b \left( \frac{1}{u+G-1}-\frac{1}{u-G-1}\right) du.
\end{split}
\end{equation*}
This gives that $\mu_o =( \frac{1}{u+G-1}-\frac{1}{u-G-1}) du$ is a finite
$T_o$-invariant measure.
Finally, for every interval $[c,d] \subset I_G$ we have
\begin{equation*}
\begin{split}
\nu_o ([c,d]) & = \bar{\mu}_o \big( [1,\infty) \times [-d,-c] \cup (-\infty,-1] \times [c,d]\big)
\\ & = \int_1^\infty \int_{-d}^{-c} \frac{d\alpha\, d\beta}{(\alpha-\beta)^2} +
\int_{-\infty}^{-1} \int_c^d \frac{d\alpha\, d\beta}{(\alpha-\beta)^2} \\ &
 = 2 \int_0^1 \int_c^d \frac{1}{u^2}\cdot \frac{du\, dv}{(1/u+v)^2} =
2\int_c^d \frac{dv}{1+v} ,
\end{split}
\end{equation*}
showing that $\nu_o=\frac{dv}{1+v}$ is a finite $\tau_o$-invariant measure.

\subsection{Quadratic surds and their conjugates}
\begin{proposition}\label{cor5}
A real number $\alpha >1$ has a purely periodic OCF expansion if and only if
$\alpha$ is a quadratic surd with $-G < \balpha <2-G$. Furthermore, if
\begin{equation}\label{eq5.1}
\alpha =[\![ \, \overline{(a_1,\e_1); (a_2,\e_2), \ldots ,(a_{r},\e_{r})} \,]\!]_o ,
\end{equation}
then
\begin{equation}\label{eq5.2}
\balpha=-\llangle \,\overline{(a_{r},\e_{r}), \ldots, (a_1,\e_1)} \,\rrangle_o .
\end{equation}
\end{proposition}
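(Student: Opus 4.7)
The proof follows the Galois-style strategy of combining Corollary~\ref{cor4} (pure periodicity implies a quadratic surd with conjugate in $-I_G$) with a finite-orbit pigeonhole argument enabled by the invertibility of $\rho_o$ established in Proposition~\ref{Lcommdiagodd}. For sufficiency, suppose $\alpha$ admits the expansion~\eqref{eq5.1} and set $\beta$ as in~\eqref{eq5.2}, so that $(\alpha,\beta)\in\SSS_o$. Corollary~\ref{cor4}(ii) gives $\rho_o^{2r}(\alpha,\beta)=(\alpha,\beta)$. Since $\rho_o$ is induced by a matrix in $\Gamma\subset\operatorname{PSL}(2,\Z)$, so is $\rho_o^{2r}$; writing it as $z\mapsto(pz+q)/(sz+t)$ with $p,q,s,t\in\Z$ and $pt-qs=1$, both $\alpha$ and $\beta$ satisfy the integer quadratic $sz^2+(t-p)z-q=0$. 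Hence $\alpha$ is a quadratic surd, $\balpha=\beta$, and $\balpha\in -I_G=(-G,2-G)$, which also yields~\eqref{eq5.2}.

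For the necessity direction, let $\alpha>1$ be a quadratic surd with $\balpha\in(-G,2-G)$, so that $(\alpha,\balpha)\in\SSS_o$. The main step is to show that the orbit $\mathcal{O}=\{\rho_o^n(\alpha,\balpha):n\in\Z\}$ is finite. Every orbit point $(\alpha',\beta')$ is a pair of Galois conjugates of a quadratic surd in $\Q(\sqrt{D})$, where $D$ is the discriminant of $\alpha$'s minimal polynomial, preserved along the orbit because $\operatorname{PSL}(2,\Z)$ acts on integer binary quadratic forms by a determinant-one substitution preserving $B^2-4AC$. A direct inspection of the two components of $\SSS_o$ (using $1-(2-G)=G-1$ and $(G-2)-(-1)=G-1$) yields the uniform gap $|\alpha'-\beta'|>G-1$. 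Writing the primitive minimal polynomial as $A'z^2+B'z+C'$ with $A'>0$, the identity $A'|\alpha'-\beta'|=\sqrt{D}$ bounds $A'\leq\sqrt{D}/(G-1)$, and combining with $|\beta'|<G$ and Vieta's formulas bounds $|B'|$ and $|C'|$ uniformly; hence only finitely many primitive triples $(A',B',C')$, and thus finitely many orbit points, can arise.

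By invertibility of $\rho_o$ on $\SSS_o$, finiteness of $\mathcal{O}$ forces $\rho_o^N(\alpha,\balpha)=(\alpha,\balpha)$ for some $N\geq 1$. Iterating formula~\eqref{eq3.5}, the first coordinate of $\rho_o^N(\alpha,\balpha)$ equals $\delta_N\,[\![(a_{N+1},\e_{N+1});(a_{N+2},\e_{N+2}),\ldots]\!]_o$ with $\delta_N=\prod_{i=1}^N(-\e_i)$; positivity of both sides forces $\delta_N=+1$ and the OCF digits to satisfy $(a_{N+j},\e_{N+j})=(a_j,\e_j)$ for all $j\geq 1$, so the OCF of $\alpha$ is purely periodic. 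Equation~\eqref{eq5.2} then follows by applying the already-proven sufficiency direction to this periodic expansion. The main obstacle is the finiteness of $\mathcal{O}$; once the uniform gap $|\alpha'-\beta'|>G-1$ and the $\operatorname{PSL}(2,\Z)$-invariance of the discriminant are in place, the remaining sign bookkeeping and pigeonhole steps are routine.
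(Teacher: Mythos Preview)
Your proof is correct and follows essentially the same strategy as the paper's. The only cosmetic difference is that you run the pigeonhole argument with $\rho_o$ on $\SSS_o$, whereas the paper transports everything through $J_o$ and works with $\bar T_o$ on $\Omega_o$ (invoking \eqref{3.6}); since Proposition~\ref{Lcommdiagodd} makes these conjugate, the arguments are interchangeable. Two small points worth tightening: in the forward direction you should note (as the paper does) that the element of $\Gamma$ realizing $\rho_o^{2r}$ is not the identity---otherwise your quadratic $sz^2+(t-p)z-q=0$ is vacuous---which follows from the fact that $\rho_o$ advances the base point a positive hyperbolic distance along $\gamma$; and in the finiteness step, ``Vieta's formulas'' is a slight misnomer, since $\alpha'$ is unbounded---what actually bounds $B'$ is the quadratic formula $2A'\beta'=-B'\pm\sqrt{D}$ together with $|\beta'|<G$, and then $C'$ is controlled via $(B')^2-4A'C'=D$, exactly as the paper does.
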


\begin{proof}
In one direction, suppose that $\alpha$ is given by \eqref{eq5.1}. Consider
the geodesic $\gamma \in \AAA_o$ with endpoints at $\gamma_\infty=\alpha$ and
$\gamma_{-\infty}=\beta =- \llangle \,\overline{(a_{r},\e_{r}), \ldots, (a_1,\e_1)} \,\rrangle_o
\in -I_G$. Corollary \ref{cor4} shows that the geodesic $\gamma$ is fixed by
$\rho_o^{2r}$, so it
is fixed by some $M\in\Gamma$, $M\neq I$. Hence both $\alpha$ and $\beta$ are fixed by
$M$; in particular, $\beta=\balpha$.

In the opposite direction, suppose that $A\alpha^2+B\alpha+C=0$ with
$\operatorname{gcd} (A,B,C)=1$, $A\geqslant 1$, and $\balpha \in -I_G$.
The quadratic surds $\alpha$, $\balpha$, $-\alpha$, $\overline{-\alpha}=-\balpha$,
and $M\alpha=\frac{a\alpha+b}{c\alpha+d}$ with $M\in \Gamma \subset \Gamma(1)$
have the same discriminant.
From $\alpha-\balpha = \frac{\sqrt{\Delta}}{A} > G-1$ and $-2AG < 2A\balpha =
-B-\sqrt{\Delta} < 2A(2-G)$, we infer that the number of quadratic surds $\alpha$
with fixed discriminant $\Delta=B^2-4AC$ must be finite.
Employing equality \eqref{3.6}, it follows that both components of
$\bT_o^k (\frac{1}{\alpha},-\balpha)$ are quadratic surds with
discriminant $\Delta$ for every $k\geqslant 0$. Since they satisfy the same kind of restrictions as $\alpha$ above,
there exist $k,k^\prime \geqslant 0$,
$k\neq k^\prime$ such that $\bT^k_o (\frac{1}{\alpha},-\balpha)=\bT^{k^\prime}_o (\frac{1}{\alpha},-\balpha)$.
The map $\bT_o$ is invertible, hence there exists $r\geqslant 1$ such that
$\bT^r_o (\frac{1}{\alpha},-\balpha)=(\frac{1}{\alpha},-\balpha)$, showing that
$\alpha$ must be of the form \eqref{eq5.1} and $\balpha$ of the form \eqref{eq5.2}.
\end{proof}

\subsection{Action of $\Gamma$ on $\R$ and continued fractions}
Define the $m$-tail of an irrational number $\alpha =[\![ (a_1,\e_1);(a_2,\e_2),\ldots ]\!]_o >1$ by
\begin{equation*}
t_m (\alpha):= (-\e_1)\cdots (-\e_m) [\![ (a_{m+1},\e_{m+1}), (a_{m+2},\e_{m+2}), \ldots ]\!]_o .
\end{equation*}

\begin{proposition}\label{cor6}
Two irrationals $\alpha,\beta  >1$ are $\Gamma$-equivalent if and only if there exist
$r,s\geqslant 0$ such that
\begin{equation}\label{eq5.3}
t_r(\alpha)=t_s (\beta) .
\end{equation}
\end{proposition}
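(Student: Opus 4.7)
The plan is to prove $(\Leftarrow)$ directly from formula \eqref{eq3.5}, and $(\Rightarrow)$ by exploiting the cross-section structure from Sections \ref{oddseq} and \ref{invmeas}.

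For $(\Leftarrow)$, each application of $\rho_o$ to a pair in $\SSS_o$ restricts on the first coordinate to a M\" obius transformation $z \mapsto 1/(\pm a_1 - z)$, which lies in $\Gamma$ by Lemma \ref{lemma1}. Iterating \eqref{eq3.5}, the $r$-fold composition $N^{(\alpha)}_r \in \Gamma$ satisfies $N^{(\alpha)}_r(\alpha) = t_r(\alpha)$, since the accumulated sign prefactor $(-\e_1)\cdots(-\e_r)$ from \eqref{eq3.5} matches precisely the definition of $t_r$. By the same reasoning $N^{(\beta)}_s(\beta) = t_s(\beta)$ for some $N^{(\beta)}_s \in \Gamma$, so the hypothesis $t_r(\alpha) = t_s(\beta)$ yields $\alpha = (N^{(\alpha)}_r)^{-1} N^{(\beta)}_s(\beta) \in \Gamma\beta$.

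For $(\Rightarrow)$, suppose $\alpha = M\beta$ with $M \in \Gamma$ and pick lifts $\gamma_\alpha, \gamma_\beta \in \AAA_o$ with $\gamma_\alpha^+ = \alpha$, $\gamma_\beta^+ = \beta$. The positive endpoint of $\rho_o^r(\gamma_\alpha)$ equals $t_r(\alpha)$ and depends only on the OCF of $\alpha$, not on $\gamma_\alpha^-$; analogously for $\beta$. Via Proposition \ref{Lcommdiagodd}, these forward $\rho_o$-orbits correspond to $\tT_o$-orbits in $\tOmega_o$, which model the first-return dynamics of the geodesic flow on $\MM_o$ to the cross-section $\XX$. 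Since $\alpha$ and $\beta$ lie in the common orbit $\Gamma\alpha = \Gamma\beta$, the two positive-endpoint sequences $\{t_r(\alpha)\}_{r \geq 0}$ and $\{t_s(\beta)\}_{s \geq 0}$ ought to meet; extracting a matching index pair delivers $t_r(\alpha) = t_s(\beta)$.

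The main obstacle is that the lifts $\gamma_\alpha, \gamma_\beta$ generically project to different geodesics on $\MM_o$: the map $M$ need not send $\gamma_\alpha^- \in -I_G$ back into $-I_G$, so $\gamma_\alpha$ and $\gamma_\beta$ are not directly $\Gamma$-related as oriented geodesics in $\H$. I would overcome this by working on $\tOmega_o$ and converting $M$, via the singularization and insertion identities used in Section \ref{cutodd}, into a finite composition of single-step tail maps of $\alpha$ on one side and of $\beta$ on the other. Each such tail map is an element of $\Gamma$, and the resulting identity $N^{(\alpha)}_r(\alpha) = N^{(\beta)}_s(\beta)$ reads off $t_r(\alpha) = t_s(\beta)$, after at most a one-step parity adjustment on $r$ or $s$ to reconcile the prefactors $(-\e_1)\cdots(-\e_r)$ and $(-\delta_1)\cdots(-\delta_s)$.
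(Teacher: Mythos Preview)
Your $(\Leftarrow)$ direction is correct and matches the paper's argument.

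Your $(\Rightarrow)$ direction has a genuine gap. You correctly identify the obstacle: $M$ need not carry $\gamma_\alpha$ to $\gamma_\beta$ as geodesics in $\AAA_o$. But your proposed fix---``converting $M$, via the singularization and insertion identities, into a finite composition of single-step tail maps of $\alpha$ on one side and of $\beta$ on the other''---is not a procedure, and as written it does not work. The singularization/insertion identities of Section \ref{cutodd} relate RCF and OCF expansions of a single real number; they do not give a way to factor an arbitrary $M\in\Gamma$ as $(N_r^{(\alpha)})^{-1}N_s^{(\beta)}$ where $N_r^{(\alpha)}$ is built from the \emph{specific} OCF digits of $\alpha$ and $N_s^{(\beta)}$ from those of $\beta$. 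Even if the tail maps $z\mapsto 1/(\pm a-z)$ generate $\Gamma$, a word in them need not match the OCF of either $\alpha$ or $\beta$, so you cannot read off $t_r(\alpha)=t_s(\beta)$ from such a factorization. The sentence ``the two positive-endpoint sequences \ldots\ ought to meet'' is exactly the statement to be proved.

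The paper avoids this by a geometric argument using cutting sequences. One fixes a common negative endpoint $\delta\in -I_G$ and takes $\gamma,\gamma'\in\AAA_o$ with $\gamma_\infty=\alpha$, $\gamma'_\infty=\beta$, $\gamma_{-\infty}=\gamma'_{-\infty}=\delta$. Then $\gamma'':=g\gamma$ and $\gamma'$ share the positive endpoint $\beta$, so by Series' lemma (two geodesics with the same forward endpoint have eventually coincident forward cutting sequences) their colored cutting sequences agree from some point on. On the other hand $\gamma$ and $\gamma''$ are $\Gamma$-equivalent, so their cutting sequences are identical. Combining, the forward blocks of $\gamma$ and $\gamma'$ eventually agree: $\ldots A_1\ldots A_r D_1 D_2\ldots$ versus $\ldots B_1\ldots B_s D_1 D_2\ldots$, and \eqref{eq3.5} then gives $t_r(\alpha)=t_s(\beta)$. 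The missing ingredient in your argument is precisely this ``same forward endpoint $\Rightarrow$ same forward cutting sequence'' step, which replaces the need to decompose $M$ algebraically.
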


\begin{proof}
The proof follows closely the outline of statement 3.3.3 in \cite{Ser}.
In one direction, if \eqref{eq5.3} holds, then $\alpha$ and $\beta$ are
$\Gamma$-equivalent because $-a_1-\frac{1}{\alpha} =t_1(\alpha)$.

Conversely, suppose that $g\alpha=\beta$ for some $g\in\Gamma$.
Fix $\delta \in -I_G$ and consider the geodesics $\gamma,\gamma^\prime \in {\mathcal A}_o$ with
$\gamma_{-\infty} =\gamma_{-\infty}^\prime =\delta$, $\gamma_\infty=\alpha$ and
$\gamma^\prime_\infty=\beta$. Their cutting sequences are
$\ldots \xi_\gamma A_1 A_2 \ldots$ and  $\ldots\xi_\gamma B_1 B_2 \ldots$, respectively,
with $A_i,B_i$ strings of type $A$, $B$, $C$ or $D$. The geodesics
$\gamma^{\prime\prime} =g\gamma$ and $\gamma^\prime$ have the same endpoint $\beta$.
Since their $\Gamma(1)$-cutting sequences in $L$ and $R$ coincide
(cf. \cite[Lemma 3.3.1]{Ser}), their cutting sequences also coincide, implying that
the cutting sequence of $\gamma^{\prime\prime}$ is of the form
$\xi_{\gamma^{\prime\prime}} \ldots B_k B_{k+1}\ldots$ for some $k\geqslant 1$.
As $\gamma$ and $\gamma^{\prime\prime}$ are $\Gamma$-equivalent geodesics, their cutting sequences
(after equivalent initial points) will coincide, implying that the cutting sequences of
$\gamma$ and $\gamma^\prime$ are of the form $\ldots \xi_\gamma A_1 \ldots A_r D_1 D_2 \ldots$
and $\ldots \xi_\gamma B_1 \ldots B_s D_1 D_2 \ldots$ respectively.
Upon \eqref{eq3.5} this implies \eqref{eq5.3}.
\end{proof}

Two rational numbers are $\Gamma$-equivalent, as shown by the following elementary

\begin{lemma}\label{lemma7}
$\ \Gamma \infty =\Q$.
\end{lemma}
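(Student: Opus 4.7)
The inclusion $\Gamma\infty \subseteq \Q\cup\{\infty\}$ is immediate: every element $M = \left(\begin{smallmatrix} a & b \\ c & d \end{smallmatrix}\right) \in \Gamma \subset \Gamma(1)$ sends $\infty$ to $a/c$. So the content of the lemma lies in the reverse inclusion: every $p/q \in \Q$ must be shown to be in the $\Gamma$-orbit of $\infty$.

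To this end, fix $p/q \in \Q$ with $\gcd(p,q)=1$. Bezout's identity provides $r,s \in \Z$ with $ps - qr = 1$, so $M := \left(\begin{smallmatrix} p & r \\ q & s \end{smallmatrix}\right)$ is a well-defined element of $\Gamma(1)$ sending $\infty$ to $p/q$. If $M \in \Gamma$ we are done, so assume $M \notin \Gamma$. The strategy is to compose $M$ on the right with an element of the $\Gamma(1)$-stabilizer of $\infty$ — which in $\operatorname{PSL}(2,\Z)$ is cyclic generated by the translation $U := \left(\begin{smallmatrix} 1 & 1 \\ 0 & 1 \end{smallmatrix}\right)$ — so as to land inside $\Gamma$ without altering the image of $\infty$.

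By Lemma \ref{lemma1} we have $[\Gamma(1):\Gamma]=2$, and checking the mod-$2$ criterion of Lemma \ref{lemma1} shows that $U \notin \Gamma$, since its reduction mod $2$ matches none of the three permitted residue classes. Consequently $\Gamma(1) = \Gamma \sqcup \Gamma U$, and the assumption $M \notin \Gamma$ forces $M \in \Gamma U$, i.e.\ $MU^{-1} \in \Gamma$. Since $U$ fixes $\infty$, the element $MU^{-1} \in \Gamma$ still sends $\infty$ to $p/q$, yielding $p/q \in \Gamma\infty$.

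No real obstacle arises — the whole argument is a single coset-representative swap, using only Lemma \ref{lemma1} and the observation that although $U \notin \Gamma$, its square $T = U^2 = \left(\begin{smallmatrix} 1 & 2 \\ 0 & 1 \end{smallmatrix}\right)$ does lie in $\Gamma$, which is exactly what makes the index-two subgroup structure suffice.
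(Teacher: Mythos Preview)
Your proof is correct. Both your argument and the paper's hinge on the same mechanism: given $M\in\Gamma(1)$ with $M\infty=p/q$, right-multiply by a power of the translation $U=\left(\begin{smallmatrix}1&1\\0&1\end{smallmatrix}\right)$ (which fixes $\infty$) to land in $\Gamma$. The paper carries this out by writing $b=b'+Ka$, $d=d'+Kc$ and checking in three separate parity cases of $(a,c)$ that a suitable $K$ makes the resulting matrix match one of the mod-$2$ residue classes of Lemma~\ref{lemma1}. Your version replaces that case analysis with a single group-theoretic observation: since $[\Gamma(1):\Gamma]=2$ and $U\notin\Gamma$, the pair $\{I,U\}$ represents the two cosets, so exactly one of $M$, $MU^{-1}$ lies in $\Gamma$. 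This is cleaner and more robust---it would transfer verbatim to any index-two subgroup not containing $U$---whereas the paper's explicit check has the minor advantage of exhibiting, for each parity type of $p/q$, which of the three mod-$2$ classes the resulting matrix falls into.
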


The problem of characterizing $\Gamma$-equivalence classes for a
broad class of subgroups of $\Gamma(1)$ has been recently investigated,
with a different approach, in \cite{Pan}.

\begin{proposition}\label{cor8}
The OCF expansion of an irrational $\alpha$ is eventually periodic if and only if $\alpha$ is a
quadratic surd.
\end{proposition}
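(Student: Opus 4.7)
The plan is to adapt the classical Lagrange strategy to the OCF, using the machinery developed earlier in the paper. The ``only if'' direction is immediate from Proposition \ref{cor5}: if the OCF of $\alpha$ is eventually periodic with pre-period $N$, then the tail $\beta := [\![\,\overline{(a_{N+1},\e_{N+1});\ldots,(a_{N+r},\e_{N+r})}\,]\!]_o$ is purely periodic, hence a quadratic surd by Proposition \ref{cor5}. Iterating the defining relation $\alpha_{m-1} = a_{m-1} + \e_{m-1}/\alpha_m$ shows that $\alpha$ is the image of $\beta$ under an element of $\Gamma \subset \operatorname{PSL}(2,\Z)$, so $\alpha$ is itself a (necessarily irrational) quadratic surd.

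For the converse, let $\alpha > 1$ be an irrational quadratic surd satisfying $A\alpha^2 + B\alpha + C = 0$ with $\gcd(A,B,C)=1$ and $A\geq 1$; the case $\alpha\in(0,1)$ reduces to this after one application of $T_o$. Set $\alpha_m := [\![(a_m,\e_m);(a_{m+1},\e_{m+1}),\ldots]\!]_o \in [1,\infty)$ for the $m$-th complete quotient, and let $\balpha_m$ denote its Galois conjugate. Each $\alpha_m$ lies in the $\Gamma$-orbit of $\alpha$ and is therefore a quadratic surd of the same discriminant $\Delta = B^2 - 4AC$, while the conjugates satisfy $\balpha_{m-1} = a_{m-1} + \e_{m-1}/\balpha_m$. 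Granting the key claim that $\balpha_m \in -I_G$ for all sufficiently large $m$, the identity $\alpha_m - \balpha_m = \sqrt{\Delta}/|A_m| \geq G-1$ (cf.\ the proof of Proposition \ref{cor5}) forces $|A_m| \leq \sqrt{\Delta}/(G-1)$, and the constraints $\alpha_m > 1$, $\balpha_m \in -I_G$ then bound $|B_m|$ and $|C_m|$. Only finitely many distinct triples $(A_m,B_m,C_m)$ are therefore possible, so by pigeonhole $\alpha_{m_1} = \alpha_{m_2}$ for some $m_1 < m_2$, making the OCF of $\alpha$ eventually periodic.

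The main obstacle is the contraction claim $\balpha_m \in -I_G$ for all large $m$. I would establish it by writing $\alpha = g_m(\alpha_m)$, where $g_m \in \Gamma$ is the composition of inverses of the basic Möbius maps $\rho_o$ of Section \ref{oddcut}; then $\balpha_m = g_m^{-1}(\balpha)$, and a standard Möbius contraction estimate along the OCF convergents shows that the $g_m^{-1}$ uniformly contract the complement of any small neighborhood of $\alpha$ into the attracting region $-I_G$. Dynamically, this reflects the fact that the natural extension $\bT_o$ of Section \ref{ocf} attracts orbits with arbitrary initial second coordinate into its invariant domain $\Omega_o = (0,1) \times I_G$; combined with the sign bookkeeping of Proposition \ref{Lcommdiagodd}, this places $\balpha_m$ in $-I_G$ for $m$ large, after which the finiteness/pigeonhole argument of the previous paragraph applies.
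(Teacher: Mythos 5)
Your ``only if'' direction is fine and is essentially the paper's (both reduce to Proposition \ref{cor5}; whether the map carrying the periodic tail to $\alpha$ lies in $\Gamma$ or merely in $\operatorname{GL}(2,\Z)$ is immaterial for concluding that $\alpha$ is a quadratic surd). The converse, however, hinges entirely on your ``contraction claim'' $\balpha_m\in -I_G$ for all large $m$, and this is asserted rather than proved; moreover, the justification you sketch does not work as stated. Writing $\alpha=g_m(\alpha_m)$ with convergents $p_k/q_k$, one has $\balpha_m=-\e_{m-1}\tfrac{q_{m-2}}{q_{m-1}}\cdot\tfrac{\balpha-p_{m-2}/q_{m-2}}{\balpha-p_{m-1}/q_{m-1}}$, so $g_m^{-1}$ sends any fixed compact set avoiding $\alpha$ into a shrinking interval around the mirror point $-\e_{m-1}q_{m-2}/q_{m-1}$, which is a finite grotesque-type continued fraction in the reversed digits: these points lie only in the \emph{closure} of $-I_G$ and can come arbitrarily close to the endpoints $-G$ and $2-G$. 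Hence ``the $g_m^{-1}$ uniformly contract the complement of a neighborhood of $\alpha$ into $-I_G$'' does not follow from contraction alone; you would have to compare the rate at which the mirror point approaches $\partial(-I_G)$ with the rate at which the correction factor tends to $1$, and no such estimate is given. The appeal to the natural extension is circular in the same way: $\bT_o$ is defined on $\Omega_o$, and the assertion that arbitrary second coordinates are ``attracted'' into $I_G$ is precisely the claim at issue. The gap is reparable: for your pigeonhole you do not need $\balpha_m\in -I_G$, only that $\balpha_m$ eventually lies in a fixed compact set such as $[-G-1,\tfrac12]$ (this bounds $A_m$ via $\alpha_m-\balpha_m\geqslant$ const, bounds $C_m$ via $\lvert 1/\alpha_m-1/\balpha_m\rvert\geqslant$ const, and then $B_m^2=\Delta+4A_mC_m$), and that weaker statement does follow from the mirror formula once one proves a lemma locating the reversed ratios in $[-G,2-G]$ --- but that lemma is exactly what is missing from your write-up.

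Your route is also genuinely different from the paper's, which never tracks conjugates of complete quotients of a general $\alpha$. The paper takes the geodesic joining $\balpha$ to $\alpha$, invokes Lemma \ref{Lemma2} to find $g\in\Gamma$ with $g\gamma\in\AAA_o$ --- which by construction places $\overline{g\alpha}=g\balpha$ in $-I_G$ --- then applies Proposition \ref{cor5} (whose proof contains the discriminant/pigeonhole argument, but under the hypothesis $\balpha\in -I_G$, where the natural extension genuinely preserves $\Omega_o$ and no contraction estimate is needed) to get pure periodicity of the OCF of $g\alpha$, and finally Proposition \ref{cor6}, the Serret-type tail theorem, to transfer eventual periodicity back to $\alpha$. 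In short, the hyperbolic-geometric lifting performs for free the normalization of the conjugate that your contraction claim is meant to achieve analytically; either supply the missing mirror-formula estimate (or its weaker compact version), or run the reduction through Propositions \ref{cor5} and \ref{cor6} as the paper does.
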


\begin{proof}
If the OCF tail of $\alpha$ is eventually periodic, then
$g\alpha =\e [\![ \overline{(a_1,\e_1),\ldots ,(a_r,\e_r)} ]\!]_o$ for some $g\in\Gamma$ and
$\e =\pm 1$. Proposition \ref{cor5} gives that $g\alpha$ is a quadratic surd, hence $\alpha$ is a quadratic surd.

Conversely, assume that $\alpha$ is a quadratic surd and $\gamma$ is the geodesic connecting $\alpha$
to its conjugate root $\overline{\alpha}$. Let $g\in\Gamma$ such that the geodesic $g\gamma$ is a lift of $\pi_o (\gamma)$ to
$\H$ with $g\gamma\in {\mathcal A}_o$. We can assume that $g\alpha >1$, reversing $\alpha$ and $\overline{\alpha}$ if necessary.
Proposition \ref{cor5} shows that $g\alpha$ has purely periodic OCF expansion, and
Proposition \ref{cor6} shows that the OCF expansion of $\alpha$ is eventually periodic.
\end{proof}

Proposition \ref{cor8} is known in more general situations, for instance it holds for all
$D$-continued fractions (see, e.g., \cite{Mas}).

\subsection{Closed geodesics on $\MM_o$}
Employing Proposition \ref{cor6} and standard arguments, one can prove
\begin{proposition}\label{cor9}
A geodesic $\bar{\gamma}$ on $\MM_0$ is closed if and only if
it has a lift $\gamma\in \AAA_o$ with purely
periodic endpoints
\begin{equation*}
\gamma_\infty =\e [\! [ \overline{(a_1\,e_1);(a_2,\e_2),\ldots, (a_r,\e_r)} ]\!]_o \quad \mbox{and}
\quad \gamma_{-\infty} =-\e \llangle \overline{(a_r,\e_r),\ldots ,(a_2,\e_2),(a_1,\e_1)} \rrangle_o
\end{equation*}
for some $\e=\pm 1$ and $(-\e_1)\cdots (-\e_r)=1$.
\end{proposition}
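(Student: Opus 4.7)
The plan is to combine the identification of $\rho_o$ as the first-return map to the cross-section $\XX \cong \SSS_o$ (established in Section \ref{invmeas}) with Proposition \ref{Lcommdiagodd} and Corollary \ref{cor4}, treating the two directions separately.

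For the reverse direction, I would start by supposing $\gamma \in \AAA_o$ has endpoints of the stated form with $(-\e_1)\cdots(-\e_r) = +1$. Corollary \ref{cor4}(i) then yields $\rho_o^r(\gamma_\infty, \gamma_{-\infty}) = (\gamma_\infty, \gamma_{-\infty})$. Since the branches of $\rho_o$ appearing in \eqref{eq3.1}--\eqref{eq3.4} are M\"obius transformations lying in $\Gamma$, the $r$-fold composition along the orbit is realized by some non-identity element $g \in \Gamma$ that fixes both endpoints of $\gamma$ and hence stabilizes $\gamma$ setwise. Projecting down, $\pi_o(\gamma) = \bar\gamma$ is closed on $\MM_o$.

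For the forward direction, I would begin with a closed $\bar\gamma$, lift it to some $\gamma_0 \in \AAA_o$ via Lemma \ref{Lemma2}, and translate closedness into periodicity of the first-return map: since the unit tangent vector to $\bar\gamma$ returns to itself under the geodesic flow, there must exist $N \geq 1$ with $\rho_o^N(\gamma_{0,\infty}, \gamma_{0,-\infty}) = (\gamma_{0,\infty}, \gamma_{0,-\infty})$. Applying the commutative diagram of Proposition \ref{Lcommdiagodd}, this becomes $\tT_o^N J_o(\gamma_{0,\infty}, \gamma_{0,-\infty}) = J_o(\gamma_{0,\infty}, \gamma_{0,-\infty})$. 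From the definition \eqref{eq4} of $\tT_o$, the third coordinate gets multiplied by $(-\e_1)\cdots(-\e_N)$ after $N$ steps, which must then equal $+1$; the first two coordinates being $\bT_o^N$-fixed forces the OCF of $|\gamma_{0,\infty}|$ and the GCF of $\gamma_{0,-\infty}$ to be purely periodic, with reversed words, exactly in the form \eqref{eq5.1}--\eqref{eq5.2} of Proposition \ref{cor5}. Setting $\e = \operatorname{sign}(\gamma_{0,\infty})$ and $r = N$ delivers the claimed endpoint form.

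I expect the main obstacle to be the sign-condition bookkeeping. Without tracking the third coordinate of $\tT_o$, one would only conclude that the OCF of $\gamma_\infty$ is purely periodic of some period $r$; but Corollary \ref{cor4}(i) then only gives $\rho_o^r(\gamma_\infty, \gamma_{-\infty}) = \pm(\gamma_\infty, \gamma_{-\infty})$, and the minus-sign case sends $\gamma$ to the mirror geodesic with endpoints $(-\gamma_\infty, -\gamma_{-\infty})$, which is $\Gamma$-equivalent to $\gamma$ but does not arise from a stabilizer of $\gamma$ itself. The requirement $(-\e_1)\cdots(-\e_r) = 1$ is precisely what makes $\rho_o^r$ honestly fix the endpoint pair rather than flip it, and this is why the extended natural extension $\tT_o$ carrying the auxiliary $\e$-coordinate has to be used throughout, rather than $\bT_o$ alone.
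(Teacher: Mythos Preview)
Your argument is correct. The paper itself offers no proof beyond the remark that the result follows ``employing Proposition \ref{cor6} and standard arguments,'' so your route is in fact more detailed than what appears there. It is also organized differently: rather than going through the tail-equivalence criterion of Proposition \ref{cor6}, you work directly with the first-return map $\rho_o$ on $\SSS_o$ and its conjugacy to $\tT_o$ (Proposition \ref{Lcommdiagodd}), together with Corollary \ref{cor4} and Proposition \ref{cor5}. The payoff is that the sign condition $(-\e_1)\cdots(-\e_r)=1$ falls out automatically from the third coordinate of $\tT_o$, whereas an approach via Proposition \ref{cor6} and the quadratic-surd characterization of Proposition \ref{cor5} would have to recover that condition by a separate argument. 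Two small points are worth tightening: Corollary \ref{cor4} is stated only for $\gamma_\infty>1$, so for $\e=-1$ you should invoke the symmetric version (immediate from \eqref{eq3.5}); and in the reverse direction, the claim $g\neq I$ deserves one sentence---each application of $\rho_o$ advances the base point $\xi_\gamma$ a positive hyperbolic distance along $\gamma$, so the $r$-fold composition is a nontrivial hyperbolic translation along its axis.
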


\subsection{The roof function and length of closed geodesics on $\MM_o$}
Our construction describes the geodesic flow on $T_1 (\MM_o)$ as a suspension flow over
the measure preserving transformation $(\tOmega_o, \tT_o,\tilde{\mu}_o)$ identified with
$(\SSS_o, \rho_o,(\alpha-\beta)^{-2} d\alpha d\beta)$.
The roof function is given by the hyperbolic distance between two consecutive return points to $\SSS_o$:
\begin{equation*}
r_o (\xi_\gamma )=d (\xi_\gamma,\eta_\gamma),
\end{equation*}
with $\xi_\gamma,\eta_\gamma\in \H$ as in Subsection \ref{oddcut}. The points $\xi_\gamma$ and $\eta_\gamma$
can also be identified with elements in $T_1 (\MM_o)$ and represent two consecutive
changes in type for the cutting sequence of the geodesic $\gamma$.
It is convenient to replace the geodesic arc $[\xi_\gamma,\eta_\gamma]$
with $[\xi,\eta]$, where $\xi=\rho_o(\xi_\gamma)$, $\eta=\rho_o (\eta_\gamma)$. Considering
$\gamma_- =\rho(\gamma_{-\infty})$, $\gamma_+=\rho_o(\gamma_\infty)$, we employ as in \cite{Ser} the formula
\begin{equation*}
d(\xi_\gamma,\eta_\gamma)= d(\xi,\eta) =\log \bigg|
\frac{\gamma_{-}-\eta}{\gamma_{-}-\xi} \cdot
\frac{\gamma_{+}-\xi}{\gamma_{+}-\eta} \bigg| .
\end{equation*}

\begin{figure}
\centering\includegraphics[width=8cm]{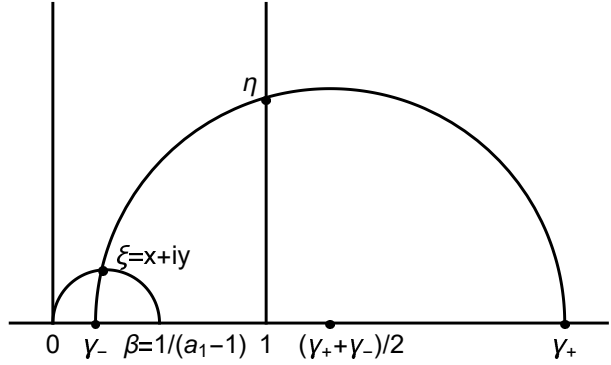}
\caption{The first return length}
\label{Figure8}
\end{figure}

Assume first that $\gamma$ is in case A, which means $\rho_o(x)=\frac{1}{a_1-x}$ and
$\xi=x+iy =[0,\beta] \cap [\gamma_-,\gamma_+]$ where $\beta=\frac{1}{a_1-1}$ and
$\eta =[1,\infty] \cap [\gamma_-,\gamma_+]$.
Trigonometry of right triangles with vertices $\gamma_-$,
$\eta$, $\gamma_+$ and $\gamma_{-},\xi,\gamma_+$ provides
\begin{equation*}
\frac{\lvert \gamma_{-}-\eta\rvert}{\lvert \gamma_+ -\eta \rvert} =
\sqrt{\frac{1-\gamma_{-}}{\gamma_+ -1}} \qquad \mbox{\rm and} \qquad
\frac{\lvert \gamma_+ -\xi \rvert}{\lvert \gamma_{-} -\xi \rvert} =
\sqrt{\frac{\gamma_+ -x}{x-\gamma_{-}}} .
\end{equation*}
The equalities $\lvert x+iy-\frac{\beta}{2}\rvert =\frac{\beta}{2}$
and $\lvert x+iy -\frac{1}{2} (\gamma_{-}+\gamma_+) \rvert =\frac{1}{2}(\gamma_+ -\gamma_{-})$ lead to
\begin{equation*}
x=\operatorname{Re} \xi =
\frac{\gamma_{-} \gamma_+}{\gamma_{-}+\gamma_+ -\beta} ,
\end{equation*}
and thus
\begin{equation*}
d_A (\xi_\gamma,\eta_\gamma) =\frac{1}{2} \log \bigg(
\frac{\gamma_+ -x}{\gamma_+-1} \cdot \frac{\gamma_- -1}{\gamma_- -x} \bigg)
=\frac{1}{2} \log \bigg( \frac{\gamma_+^2 (\beta^{-1} -\gamma_+^{-1})}{\gamma_+-1} \cdot
\frac{\gamma_- -1}{\gamma_-^2 (\beta^{-1} -\gamma_-^{-1})}   \bigg) .
\end{equation*}
Employing $\frac{1}{\beta}-\rho_o(x)=x-1$, we gather
\begin{equation*}
d_A (\xi_\gamma , \eta_\gamma) =\frac{1}{2} \log \bigg(
\frac{F_A (\gamma_\infty)}{F_A (\gamma_{-\infty})} \bigg),
\quad \mbox{\rm where} \quad
F_A (x)= \rho_o (x)^2 \, \frac{x-1}{\rho_o(x)-1} .
\end{equation*}

Similar computations in each of the cases B, C, D provide
\begin{equation*}
F_B (x)= \rho_o(x)^2\, \frac{x-1}{\rho_o(x)+1} ,\qquad
F_C (x)= \rho_o(x)^2\, \frac{x+1}{\rho_o (x)+1},\qquad
F_D (x)= \rho_o(x)^2\, \frac{x+ 1}{\rho_o(x)-1} ,
\end{equation*}
and actually we get the general formula
\begin{equation}\label{eq5.4}
d(\xi_\gamma, \eta_\gamma) =\frac{1}{2} \log \bigg(
\frac{F (\gamma_\infty)}{F (\gamma_{-\infty})}\bigg) \quad
\mbox{\rm with} \quad
F(x)= F_\gamma (x)=\rho_o(x)^2\, \frac{x-\operatorname{sign}(\gamma_\infty)}{\rho_0 (x)-\operatorname{sign}(\rho_o(\gamma_\infty))} .
\end{equation}

If $(\alpha,\beta)\in \SSS_o$, then there exists $\e\in \{\pm 1\}$ such that $(\e\alpha,\e\beta)=(\alpha^\prime,\beta^\prime)$
with
\begin{equation*}
\alpha^\prime =[\![ (a_1,\e_1);(a_2,\e_2),\ldots ]\!]_o >1,\quad\beta^\prime =- \llangle (a_0,\e_0),(a_{-1},\e_{-1}),\ldots \rrangle_o
\in -I_G.
\end{equation*}
Eqs. \eqref{eq3.5} and \eqref{eq5.4} then provide
\begin{equation}\label{eq5.5}
\begin{split}
F (\alpha) & =\rho_o(\alpha)^2 (-\e_1) \,\frac{[\![ (a_1,\e_1);(a_2,\e_2),(a_3,\e_3),\ldots ]\!]_o -1}{[\![
(a_2,\e_2);(a_3,\e_3),(a_4,\e_4),\ldots ]\!]_o -1} =\rho_o(\alpha)^2 F_+ (\alpha) , \\
F (\beta) & =
\rho_o( \beta)^2 (-\e_1) \,\frac{\llangle (a_{0},\e_{0}),(a_{-1},\e_{-1}),(a_{-2},\e_{-2}),\ldots \rrangle_o +1}{\llangle
(a_{1},\e_{1}),(a_{0},\e_{0}),(a_{-1},\e_{-1}),\ldots \rrangle_o +1} =\rho_o (\beta)^2 F_- (\beta) .
\end{split}
\end{equation}

When $\bar{\gamma}$ is a closed geodesic on $\MM_o$ with endpoints as in Proposition \ref{cor9}, the contribution of each of the factors
$F_+$ and $F_-$ to the length of $\bar{\gamma}$ is one. Using
$(a_{n+r},\e_{n+r})=(a_n,\e_n)$, we find
\begin{equation}\label{eq5.6}
\operatorname{length}(\bar{\gamma})  =
\log \Bigg( \prod\limits_{k=1}^{r}
\frac{[\![ \overline{(a_{k+1},\e_{k+1});(a_{k+2},\e_{k+2}),\ldots ,(a_{k+r},\e_{k+r})} ]\!]_o^2}{
\llangle \overline{(a_{k},\e_{k}),(a_{k-1},\e_{k-1}),\ldots, (a_{k+r-1},\e_{k+r-1})}\rrangle_o^2}\Bigg)
=\log \Bigg( \frac{(\rho_o^r)^\prime (\gamma_\infty)}{(\rho_o^r)^\prime (\gamma_{-\infty})} \Bigg) .
\end{equation}

\section{Geodesic coding and even continued fractions}

\subsection{The group $\Theta$ and the modular surface $\MM_e=\Theta\backslash\H$}

The Theta group
\begin{equation*}
\Theta :=\left\{ M\in \Gamma (1) : M \equiv I_2 \ \mbox{\rm or}\ \left( \begin{matrix}
0 & 1 \\ 1 & 0 \end{matrix}\right) \pmod{2}  \right\} ,
\end{equation*}
is the index three subgroup of $\Gamma(1)$ generated by
$S=\left( \begin{smallmatrix} 0 & -1 \\ 1 & 0 \end{smallmatrix} \right)$
and $T=\left( \begin{smallmatrix} 1 & 2 \\ 0 & 1 \end{smallmatrix} \right)$.
The image of the left half of the standard Dirichlet region
$\{ \lvert \operatorname{Re} z \rvert < 1, \lvert z\rvert >1 \}$ of
$\Theta \backslash \H$ under the transformation $S$ coincides with
the region $\{ \operatorname{Re} z >0, \lvert z\rvert <1,\lvert z-\frac{1}{2}\rvert >\frac{1}{2}\}$.
As a result, the standard Farey cell $\{ 0< \operatorname{Re} z <1, \lvert z-\frac{1}{2}\rvert >\frac{1}{2}\}$
provides a fundamental domain for $\MM_e=\Theta \backslash \H=\pi_e (\H)$.
We find that $\MM_e$ is homeomorphic to a sphere with a conic point at $\pi_e(i)$ and
cusps at $\pi_e (\infty)$ and $\pi_e(1)$.

The edges of the Farey tessellation project to
the line running from $\pi_e(1)$ to $\pi_e (\infty)$.
Bauer and Lopes \cite{BL} realized the ECF natural extension $\bT_e$ as a section of the billiard flow on $\MM_e$.
Here we describe the extension $\tilde{T}_e$ of $\bar{T}_e$ as a section
of the geodesic flow on $T_1(\MM_e)$.

The coding of geodesics on $\Theta \backslash \H$ is analogous to the coding for
$\Gamma \backslash \H$ described in Sections \ref{oddseq} and \ref{cut}. However, in this case we do not use a checkerboard coloring.
Here $\AAA_e$ is the set of geodesics in $\H$ with endpoints
\begin{equation*}
(\gplus,\gneg) \in \SSS_e := \big( (-\infty ,-1) \cup (1,\infty)\big) \times (-1,1) ,
\end{equation*}
while $\xi_\gamma$ and $\eta_\gamma$ are defined by
\begin{equation*}
\xi_\gamma =\begin{cases} \gamma \cap [1,\infty] & \mbox{\rm if $\gamma_\infty >1$} \\
\gamma \cap [-1,\infty] & \mbox{\rm if $\gamma_\infty < -1$,} \end{cases}
\qquad
\eta_\gamma =\begin{cases} \gamma \cap [a_1,a_1+\e_1] & \mbox{\rm if $\gamma_\infty >1$} \\
\gamma \cap [-a_1,-a_1 -\e_1] & \mbox{\rm if $\gamma_\infty < -1$,} \end{cases}
\end{equation*}
as in the odd continued fraction case.

Every geodesic $\bar{\gamma}$ on $\MM_e$ lifts to a geodesic $\gamma \in\AAA_e$.
Let $\XX_e$ be the set of elements $(\xi_\gamma,u_\gamma)\in T_1 (\MM_e)$ with
base point $\xi_\gamma \in \pi_e (\pm 1 +i\R)$ and unit tangent vector $u_\gamma$
pointing along the geodesic $\pi_e(\gamma)$ such that $\pi_e(\eta)$ gives the base point
of the first return of $\pi_e (\gamma)$ to $\XX_e$. The base point $\xi_\gamma$ breaks
the cutting sequence of $\pi_e(\gamma)$ into strings $L^{2k-2}R$, $L^{2k-1}R$,
$R^{2k-2}L$, $R^{2k-1}L$ that are concatenating according to the rules that will be
described in Subsection \ref{evencut}.

As in Subsection \ref{oddcut} four cases can occur, depicted in Figures \ref{Figure9} and \ref{Figure10}.
Again, we consider cases A, B, C, and D. Here, case A corresponds to $\gplus\in(2k-1,2k)$ and $\e_1=-1$,
case B to $\gplus\in(2k,2k+1)$ and $\e_1=+1$, case C to $\gplus\in(-2k,-2k+1)$ and $\e_1=-1$,
and case D to $\gplus\in(-2k-1,-2k)$ and $\e_1=+1$.

In cases A and B, we write $\gamma_\infty =[\![ (a_1,\e_1);(a_2,\e_2),\ldots ]\!]_e$ and
$\gamma_{-\infty} =- \llangle (a_0,\e_0),(a_{-1},\e_{-1}),\ldots \rrangle_e$.
For C and D, we write $\gamma_\infty =-[\![ (a_1,\e_1);(a_2,\e_2),\ldots ]\!]_e$ and
$\gamma_{-\infty} = \llangle (a_0,\e_0),(a_{-1},\e_{-1}),\ldots \rrangle_e$.

\begin{figure}
\centering\includegraphics*[scale=0.8, bb=0 0 270 120]{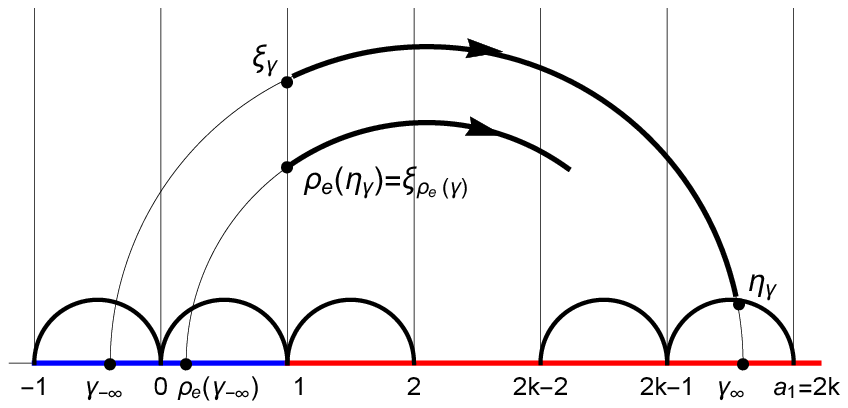}
\includegraphics*[scale =0.8, bb=0 0 250 150]{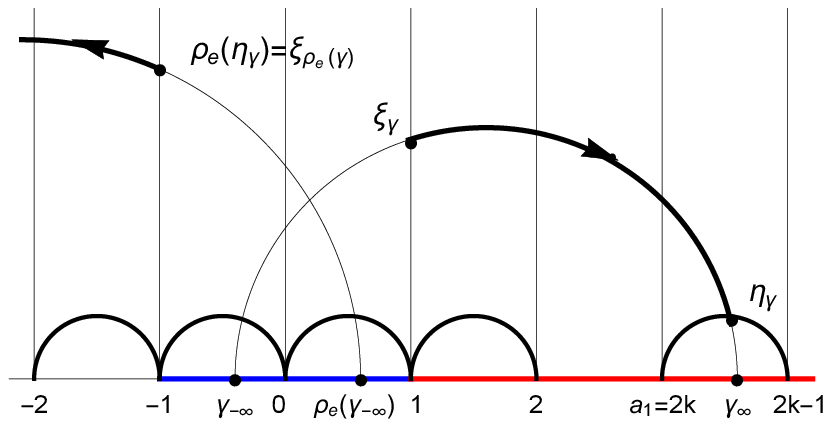}
\caption{ECF cases A ($\gamma_\infty >1,\e_1=-1,\xi_\gamma L^{2k-2}R\eta_\gamma$) and B
($\gamma_\infty >1,\e_1=+1,\xi_\gamma L^{2k-1}R\eta_\gamma$)}
\label{Figure9}
\end{figure}

\begin{figure}
\centering\includegraphics*[scale=0.8, bb=0 0 270 120]{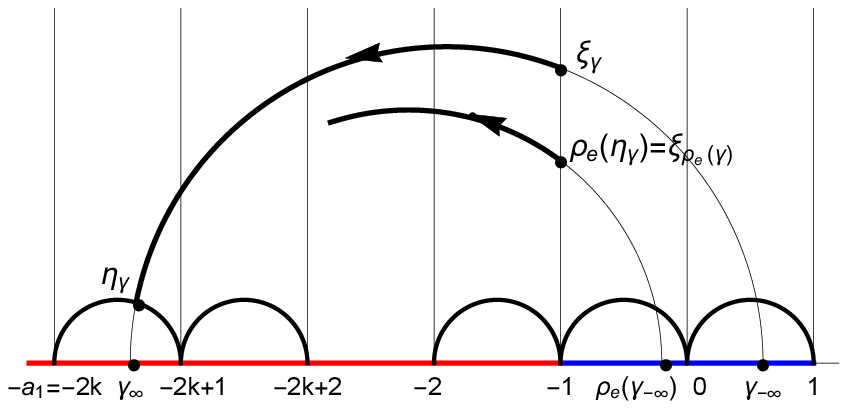}
\includegraphics*[scale =0.8, bb=0 0 250 150]{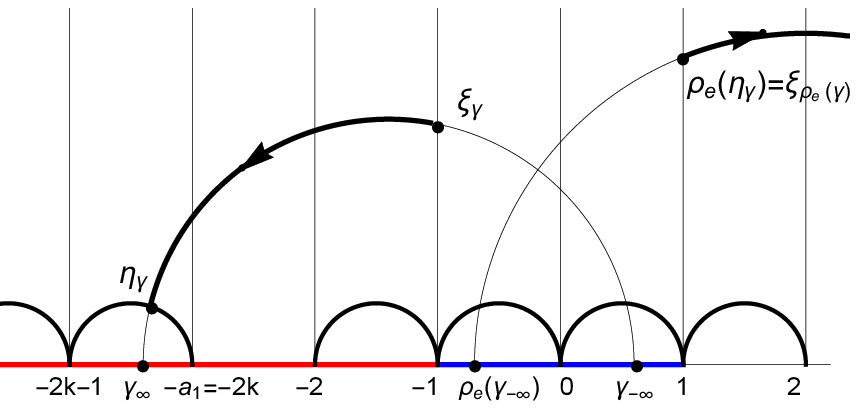}
\caption{ECF cases C ($\gamma_\infty <-1, \e_1=-1,\xi_\gamma R^{2k-2}L\eta_\gamma$) and D
($\gamma_\infty <-1, \e_1=+1,\xi_\gamma R^{2k-1}L\eta_\gamma$)}
\label{Figure10}
\end{figure}

When $\gplus>1$,
the M\" obius transformation $\rho_e(x)=\frac{1}{a_1-x}$ belongs to $\Theta$ and we have
\begin{equation*}
\rho_e (\gamma_{-\infty})  =\e_1 \llangle (a_1,\e_1),(a_0,\e_0),\ldots \rrangle_e \in (0,1) \quad
\mbox{\rm and} \quad \rho_e (\gamma_\infty)  = (-\e_1) [\![ (a_2,\e_2); (a_3,\e_3),\ldots ]\!]_e.
\end{equation*}

When $\gamma_\infty <-1$ and we have
the M\" obius transformation $\rho_e(x)=\frac{1}{-a_1-x}$ belongs to $\Theta$ and
\begin{equation*}
\rho_e (\gamma_{-\infty})  =-\e_1 \llangle (a_1,\e_1),(a_0,\e_0),\ldots \rrangle_e \in (-1,0) \quad
\mbox{\rm and} \quad \rho_e (\gamma_\infty)  = \e_1 [\![ (a_2,\e_2); (a_3,\e_3),\ldots ]\!]_e.
\end{equation*}
In all cases we have
\begin{equation}\label{eq6.1}
\begin{split}
\rho_e & \big( \e [\![ (a_1,\e_1);(a_2,\e_2),\ldots ]\!]_e , -\e
\llangle (a_0,\e_0),(a_{-1},\e_{-1}),\ldots \rrangle_e) \\
& = -\e_1 \e \big(  [\![ (a_2,\e_2);(a_3,\e_3),\ldots ]\!]_e,
- \llangle (a_{1},\e_{1}),(a_{0},\e_{0}),\ldots \rrangle_e \big) ,
\end{split}
\end{equation}
where $\e =+1$ in cases A and B, and $\e=-1$ in cases C and D.
Again, we get that when $\e_1=+1$, $\rho_e(\gplus)$ and $\gplus$ are on opposite sides of the imaginary axis,
and $\rho_e$ agrees with the transformation $\rho$ for the RCF in \cite{Ser}. As in the OCF case, this is exactly where
the RCF and ECF agree, cases A and C are followed by A or B, and cases B and D are followed by cases C or D.

The map $J_e:\SSS_e \rightarrow \tOmega_e$, $J_e (x,y)=\operatorname{sign} (x)
(\frac{1}{x},-y,1)$ is invertible. Direct verification reveals
\begin{equation}\label{eq6.2}
J_e \rho_e J_e^{-1} = \tT_e .
\end{equation}

As in Subsection \ref{invmeas}, the push-forwards of the measure
$(\alpha-\beta)^{-2}d\alpha d\beta$ on $\SSS_e$ under the maps
$\pi\circ J_e$, $\pi_1 \circ J_e$, and $\pi_2 \circ J_e$ are
$\bar{T}_e$-invariant, $T_e$-invariant,
and $\tau_e$-invariant, respectively.
For intervals $[a,b]\subset (0,1)$, $[c,d]\subset (-1,1)$ and $E=[a,b]\times [c,d]$, we find
\begin{equation*}
\begin{split}
\bar{\mu}_e (E) & =2\iint_E \frac{dxdy}{(1+xy)^2} ,
\qquad \nu_e ([c,d]) = 2\int_c^d \frac{dv}{1+v} , \\
\mu_e ([a,b]) & =\int_{1/b}^{1/a} \int_{-1}^1 \frac{d\alpha d\beta}{(\alpha-\beta)^2} +
\int_{-1/a}^{-1/b} \int_{-1}^1 \frac{d\alpha d\beta}{(\alpha-\beta)^2} =
\int_a^b \Big( \frac{1}{1+u}+\frac{1}{1-u}\Big) du ,
\end{split}
\end{equation*}
which coincide with the invariant measures from \cite{Sch2}.

The analogues of Propositions \ref{cor5}, \ref{cor6}, \ref{cor8} and
\ref{cor9} come from changing the subscript $o$ to $e$, since the same equalities hold.

\begin{proposition}\label{evencors}
{\em (i)}
A real number $\alpha >1$ has a purely periodic ECF expansion if and only if
$\alpha$ is a quadratic surd with $-1 < \balpha <1$. Furthermore, if
\begin{equation*}
\alpha =[\![ \, \overline{(a_1,\e_1); (a_2,\e_2), \ldots ,(a_{r},\e_{r})} \,]\!]_e ,
\end{equation*}
then
\begin{equation*}
\balpha=-\llangle \,\overline{(a_{r},\e_{r}), \ldots, (a_1,\e_1)} \,\rrangle_e .
\end{equation*}

{\em (ii)} Two irrational numbers $\alpha,\beta  >1$ are $\Theta$-equivalent if and only if there are
$r,s\geqslant 0$ such that
\begin{equation}\label{eventail}
t_r(\alpha)=t_s (\beta) ,
\end{equation}
where $t_m ([\![ (a_1,\e_1);(a_2,\e_2),\ldots ]\! ]_e) :=(-\e_1)\cdots (-\e_m)
[\![ (a_{m+1},\e_{m+1});(a_{m+2},\e_{m+2}),\ldots ]\!]_e$.

{\em (iii)} The ECF expansion of an irrational $\alpha$ is eventually periodic if and only if $\alpha$ is a
quadratic surd.

{\em (iv)} A geodesic $\bar{\gamma}$ on $\MM_e$ is closed if and only if
it has a lift $\gamma\in \AAA_e$ with purely
periodic endpoints
\begin{equation}\label{eq6.4}
\gamma_\infty =\e [\! [ \overline{(a_1,\e_1);(a_2,\e_2),\ldots, (a_r,\e_r)} ]\!]_e \quad \mbox{and}
\quad \gamma_{-\infty} =-\e \llangle \overline{(a_r,\e_r),\ldots ,(a_2,\e_2),(a_1,\e_1)} \rrangle_e
\end{equation}
for some $\e\in \{\pm 1\}$ and $(-\e_1)\cdots (-\e_r)=1$.
\end{proposition}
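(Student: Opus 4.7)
The plan is to follow the four proofs of Propositions \ref{cor5}, \ref{cor6}, \ref{cor8}, and \ref{cor9}, replacing the OCF machinery $(\rho_o,J_o,\tT_o,\bT_o)$ throughout by its ECF counterpart $(\rho_e,J_e,\tT_e,\bT_e)$ and appealing to \eqref{eq6.1} and \eqref{eq6.2} in place of \eqref{eq3.5} and Proposition \ref{Lcommdiagodd}. The key preliminary step is the ECF analog of Corollary \ref{cor4}: iterating \eqref{eq6.1} $r$ times on the pair $(\alpha,\beta)$ appearing in (i) should yield $\rho_e^r(\alpha,\beta)=(-\e_1)\cdots(-\e_r)(\alpha,\beta)$, whence $\rho_e^{2r}(\alpha,\beta)=(\alpha,\beta)$.

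For (i), the forward direction then follows exactly as in Proposition \ref{cor5}: both endpoints of the associated geodesic are fixed by a common non-identity $M\in\Theta$, forcing the second coordinate to be the algebraic conjugate $\balpha$. For the backward direction, I would use the ECF analog of \eqref{3.6} to observe that each component of $\bT_e^k(1/\alpha,-\balpha)$ remains a quadratic surd with the same discriminant as $\alpha$. Since $\alpha>1$ and $\balpha\in(-1,1)$ cut out only finitely many quadratic surds at any fixed discriminant, the $\bT_e$-orbit must cycle, yielding a purely periodic ECF expansion of the form claimed.

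For (ii), I would transcribe the cutting-sequence argument of Proposition \ref{cor6}. The direction \eqref{eventail}$\Rightarrow$$\Theta$-equivalence is built into \eqref{eq6.1}, which provides an explicit element of $\Theta$ carrying $\alpha$ to $t_1(\alpha)$. Conversely, if $g\alpha=\beta$ with $g\in\Theta$, I would form geodesics $\gamma,\gamma'\in\AAA_e$ with a common backward endpoint $\delta\in(-1,1)$ and forward endpoints $\alpha,\beta$; the $\Gamma(1)$-cutting sequences in $L,R$ of $g\gamma$ and $\gamma'$ eventually agree by Lemma 3.3.1 of \cite{Ser}, and the ECF concatenation rules for the strings $L^{2k-2}R,L^{2k-1}R,R^{2k-2}L,R^{2k-1}L$ upgrade this to equality of ECF tails, which is \eqref{eventail} via \eqref{eq6.1}. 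Part (iii) then combines (i) and (ii): if $\alpha$ is a quadratic surd, choose $g\in\Theta$ so that $g\gamma\in\AAA_e$ has positive endpoint exceeding $1$, apply (i) to $g\alpha$, and transfer periodicity back to the ECF tail of $\alpha$ through (ii). Part (iv) follows by the standard identification of closed geodesics with $\Theta$-conjugacy classes of hyperbolic axes, with the condition $(-\e_1)\cdots(-\e_r)=1$ capturing that the element of $\Theta$ realized after $r$ applications of $\rho_e$ preserves rather than reverses the orientation of $\gamma$.

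The main obstacle is the careful sign bookkeeping in the ECF analog of Corollary \ref{cor4}: the sign pattern in \eqref{eq6.1} differs from that of \eqref{eq3.5}, so iterating correctly and verifying that the accumulated factor after $r$ steps is exactly $(-\e_1)\cdots(-\e_r)$ requires a small induction. A secondary difficulty is the absence of the checkerboard coloring in the ECF setting, so the cutting-sequence argument in (ii) must rely solely on the uniqueness of the decomposition into the four string types $L^{2k-2}R,L^{2k-1}R,R^{2k-2}L,R^{2k-1}L$ to recover the ECF digits from the plain Series $L,R$-sequence.
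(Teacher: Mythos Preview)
Your proposal is correct and matches the paper's approach exactly: the paper gives no separate proof of this proposition, merely stating that ``the analogues of Propositions \ref{cor5}, \ref{cor6}, \ref{cor8} and \ref{cor9} come from changing the subscript $o$ to $e$, since the same equalities hold,'' which is precisely your plan. One small correction: your ``main obstacle'' is illusory, since \eqref{eq6.1} and \eqref{eq3.5} are in fact the \emph{same} formula---in \eqref{eq3.5} the right-hand side is $(-\e_1)\bigl(\e\,[\![\ldots]\!]_o,\,-\e\,\llangle\ldots\rrangle_o\bigr)$, while in \eqref{eq6.1} it is $(-\e_1\e)\bigl([\![\ldots]\!]_e,\,-\llangle\ldots\rrangle_e\bigr)$, and these agree on the nose---so no new sign bookkeeping or induction is needed and the ECF analog of Corollary \ref{cor4} follows verbatim.
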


The second point should be compared with Theorem 1 in
\cite{KL}. It seems that the definition of
the tail $t_n(x)$ in \cite[Eq.\,(1.3)]{KL} should be changed to
$t_n(x)=(-e_1)\cdots (-e_n) [0;e_{n+1}/a_{n+1},e_{n+2}/a_{n+2}, \ldots]$ for that statement to hold.

Analogous formulas for the roof function
and for the length of a closed geodesic also hold, as in \eqref{eq5.4} and \eqref{eq5.5}.
Since $\llangle (b_0,\e_0),(b_1,\e_1),\ldots \rrangle_e^{-1}=\e_0
[\![ (b_0,\e_1);(b_1,\e_2),(b_2,\e_3),\ldots]\!]_e$, the analogue of
\eqref{eq5.6} shows that the length of a closed geodesic $\bar{\gamma}$ on $\MM_e$ with
endpoints as in \eqref{eq6.4} is given by
\begin{equation}\label{eq6.5}
\log \bigg( \prod\limits_{k=1}^r
[\![ \overline{(a_{k+1},\e_{k+1});\ldots,(a_{k+r},\e_{k+r})} ]\!]_e^2
[\![ \overline{(a_k,\e_{k-1});(a_{k-1},\e_{k-2}),\ldots,(a_{k-r+1},\e_{k-r})} ]\!]_e^2 \bigg).
\end{equation}

In this case $\MM_e$ has two cusps, $\pi_e (\infty)$ and $\pi_e (1)$, and the group $\Theta$ splits the rationals in two equivalence classes,
as shown by the following elementary

\begin{lemma}\label{lemma10}
$\  \Theta \infty=\big\{ \frac{a}{c}\in \Q :\mbox{$a$ or $c$ is even}\big\}$ and
$\ \Theta 1=\big\{ \frac{m}{n}\in\Q : \mbox{$m$ and $n$ are odd}\big\}$.
\end{lemma}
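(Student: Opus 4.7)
My approach mirrors the strategy for Lemma \ref{lemma7}, while keeping careful track of the two congruence classes that comprise $\Theta$. The preliminary observation is that for any $M=\left( \begin{smallmatrix} a & b \\ c & d \end{smallmatrix}\right)\in \Gamma (1)$, the values $M\infty=a/c$ and $M\cdot 1=(a+b)/(c+d)$ are automatically in lowest terms, since $ad-bc=1$ forces $\gcd(a,c)=\gcd(a+b,c+d)=1$. Hence the question reduces entirely to determining which parity patterns of $(a,c)$ and $(a+b,c+d)$ can occur for $M\in \Theta$.

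First I would check the easy inclusions by inspecting the two mod-$2$ classes $I_2$ and $\left( \begin{smallmatrix} 0 & 1 \\ 1 & 0 \end{smallmatrix}\right)$ defining $\Theta$. In both cases one finds $(a,c)\pmod 2\in \{(1,0),(0,1)\}$ and $(a+b,c+d)\equiv (1,1)\pmod 2$. This immediately gives $\Theta\infty \subseteq \{ a/c : \text{exactly one of } a,c \text{ is even}\}$ and $\Theta\cdot 1 \subseteq \{ m/n : m,n \text{ both odd}\}$, and the two sets are visibly disjoint, consistent with $\MM_e$ having two distinct cusps.

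For the reverse inclusion $\Theta\infty \supseteq \{ a/c : \text{one of } a,c \text{ even}\}$, given such coprime $(a,c)$, pick $(b_0,d_0)\in\Z^2$ with $ad_0-b_0 c=1$ by B\'ezout, and slide $(b_0,d_0)\mapsto (b_0+ka,d_0+kc)$ (which preserves $ad-bc=1$) to achieve the required parity of $b$. The parity of $d$ is then automatically forced by $ad-bc=1$. This is essentially the argument used in Lemma \ref{lemma7}.

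The main obstacle is the reverse inclusion $\Theta\cdot 1\supseteq \{ m/n : m,n \text{ both odd}\}$. Given coprime odd $m,n$, I would solve $an-mc=1$ in integers $(a,c)$ and set $b=m-a$, $d=n-c$, so that $M\cdot 1=m/n$ and $\det M=an-mc=1$. The subtle point is that $M$ must then automatically lie in $\Theta$. The key observation is that $an-mc=1$ with $m,n$ odd forces $a-c\equiv 1\pmod 2$, i.e., $a$ and $c$ have opposite parity; combined with $b=m-a$, $d=n-c$, and the oddness of $m,n$, a short case check yields $M\equiv I_2\pmod 2$ when $a$ is odd and $M\equiv \left( \begin{smallmatrix} 0 & 1 \\ 1 & 0 \end{smallmatrix}\right) \pmod 2$ when $a$ is even, so that $M\in\Theta$ in either case.
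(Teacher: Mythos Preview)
Your approach is essentially identical to the paper's own (commented-out) proof: for $\Theta\infty$ you use B\'ezout and slide $(b,d)$ along $(a,c)$ to adjust parities, and for $\Theta\cdot 1$ you take $M=\left(\begin{smallmatrix}a & m-a\\ c & n-c\end{smallmatrix}\right)$ with $an-mc=1$ and observe that the forced opposite parity of $a,c$ places $M$ in $\Theta$. One small slip in the $\Theta\infty$ part: when $a$ is even and $c$ is odd, the relation $ad-bc\equiv 1\pmod 2$ does \emph{not} determine the parity of $d$ (both parities satisfy it), and sliding $b_0\mapsto b_0+ka$ with $a$ even cannot change $b$'s parity anyway; in that case you must instead slide $d_0\mapsto d_0+kc$ (with $c$ odd) to force $d$ even, exactly as the paper does.
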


\subsection{Connection with cutting sequence}\label{evencut}
As before, cases A and B give the cutting sequence $\ldots xL^{n_1}R^{n_2}L^{n_3}\ldots$, where $x$ indicates $\gamma\cap [0,\infty]$ and $[n_1;n_2,n_3\ldots]$ is the regular continued fraction expansion of $\gplus$. This again corresponds to $\ldots L \x L^{n_1-1}R^{n_2}L^{n_3}\ldots$. We have two cases to consider for the first digit of the even continued fraction expansion.

\begin{description}
\item[(A)] $n_1=2k-1$ is odd, and $\gplus\in(2k-1,2k)$.
This gives the cutting sequence $\ldots\x L^{2k-2}R\eta_\gamma$ and we get $(2k,-1)=(a_1,\e_1)$. The next digit is represented by $L^nR$. 
In the case $n_2>1$ the next digit is $L^0R$, corresponding to $(2,-1)$. This again corresponds to the insertion and singularization algorithm \cite{BL}, as follows:
\begin{align*}
&2k-1+\cfrac{1}{1+\frac{1}{n_3+\ldots}}=[\![(2k,-1);(n_3+1,+1),\ldots]\!]_e, \\
&2k-1+\frac{1}{n_2+\ldots}=[\![(2k,-1);(2,-1)^{n_2-1},(n_3+1,+1),\ldots]\!]_e
\quad \mbox{\rm if $n_2 >1$,}
\end{align*}
where $(2,-1)^{t}$ means the digit $(2,-1)\ t$-times.

\item[(B)] $n_1=2k$ is even, and $\gplus\in(2k,2k+1)$.
This gives the cutting sequence $\ldots \x L^{2k-1}R\eta_\gamma$, and we get $(2k,+1)=(a_1,\e_1)$. The next digit is represented by $R^nL$. 
In the first case, we needed to consider what happened when $n_2>1,$ here we need to look at $n_2=1$. Then, we have $\ldots \x L^{2k-1}R\eta_\gamma L^{n_3}\ldots$, and the next digit corresponds to $R^0L$, which is again $(2,-1)$.
\end{description}
Strings starting with $R$ are treating similarly, with the roles of $L$ and $R$ switched.

Cases C and D are classified similarly, with roles of $L$ and $R$ exchanged. In this case, we get the cutting sequence $\ldots \x R^{n_1-1}L \eta_\gamma L^{n_2-1}R^{n_3}\ldots$, where $\gplus=-[n_1;n_2,n_3,\ldots]$.

\subsection{Extended even continued fractions} For the end point $\gneg$, we must consider the extended even continued fractions. Unlike the odd and grotesque continued fractions, the extended even continued fractions correspond to reindexing of the even continued fractions.

We consider cases A, B, C, and D similar to the grotesque continued fraction case.

\begin{description}
\item[(A)] $\gplus>1, \gneg\in(0,1)$, and $\e_0=-1$ gives
the cutting sequence $\ldots L^{2k-2}R \x\ldots$ and $(a_0,\e_0)=(2k,+1)$.
\item[(B)] $\gplus>1, \gneg\in(-1,0),$ and $\e_0=+1$ gives
the cutting sequence $\ldots R^{2k-1}L \x\ldots$ and $(a_0,\e_0)=(2k,-1)$.
\item[(C)] $\gplus<-1, \gneg\in(0, 1),$ and $\e_0=+1$ gives
the cutting sequence $\ldots L^{2k-1}R \x\ldots$ and $(a_0,\e_0)=(2k,+1)$.
\item[(D) ]$\gplus<-1,\gneg\in(-1,0),$ and $\e_0=-1$ gives
the cutting sequence $\ldots R^{2k-2}L \x\ldots$ and $(a_0,\e_0)=(2k,-1)$.
\end{description}

That is, we interpret the strings the same way as in the even continued fractions. However, strings of type A and C must be preceded by those of type A or B, and strings of type B and D must be preceded by C or D. This is similar to the restrictions on the grotesque continued fractions from Section \ref{grot}.

\section{Acknowledgments}
We are grateful to Pierre Arnoux, Byron Heersink, and to the anonymous referee for constructive comments and suggestions.

The first author acknowledges partial support during his
visits to IMAR Bucharest by a grant from Romanian Ministry of
Research and Innovation, CNCS-UEFISCDI, project PN-III-P4-ID-PCE-2016-0823, within PNCDI III.

\end{document}